\documentclass[11pt]{amsart}

\usepackage{amsgen,amsthm,amstext,amsbsy,amsopn,amsfonts,amssymb, enumerate}

\usepackage{comment}
\usepackage{hyperref}
\usepackage{amsmath}
\usepackage[usenames]{color}
\usepackage{xcolor}

\newcommand\la{\langle}
\newcommand\ra{\rangle}

\renewcommand\aa{{\mathfrak a}}

\newcommand\hh{{\mathfrak h}}

\newcommand\nn{{\mathfrak n}}

\newcommand\vv{{\mathfrak v}}

\newcommand\zz{{\mathfrak z}}

\newcommand\RR{\mathbb R}
\newcommand\ZZ{\mathbb Z}

\newcommand{\lrc}{\lrcorner}

\newcommand\ad{\operatorname{ad}}
\newcommand\Ad{\operatorname{Ad}}

\newcommand\Ta{\operatorname{T}}
\newcommand\Sym{\operatorname{Sym}}

\DeclareMathOperator{\En}{E}

\theoremstyle{plain}

\newtheorem{thm}{Theorem}[section]
\newtheorem{lem}[thm]{Lemma}
\newtheorem{prop}[thm]{Proposition}
\newtheorem{cor}[thm]{Corollary}

\theoremstyle{definition}
\newtheorem{defn}[thm]{Definition}
\newtheorem{rem}[thm]{Remark}
\newtheorem{example}[thm]{Example}

\begin{document}

\title[Magnetic Killing tensors]{Magnetic Killing tensors and first integrals of the magnetic flow}

\author[Andrei Moroianu]{Andrei Moroianu}
\address{Andrei Moroianu \\ Université Paris-Saclay, CNRS,  Laboratoire de mathématiques d'Orsay, 91405, Orsay, France, 
and Institute of Mathematics “Simion Stoilow” of the Romanian Academy, 21 Calea Grivitei, 010702 Bucharest, Romania}
\email{andrei.moroianu@math.cnrs.fr}

\author[Gabriela Ovando]{Gabriela P. Ovando}
\address{Departamento de Matem\'atica, ECEN - FCEIA, Universidad Nacional de Rosario. Pellegrini 250, 2000 Rosario, Santa Fe, Argentina}
\email{gabriela@fceia.unr.edu.ar}
\thanks{
	This work was partially
	supported by a grant from the IMU-CDC and Simons Foundation. A.M. has been partially supported by the PNRR-III-C9-2023-I8 grant CF 149/31.07.2023 {\em Conformal Aspects of Geometry and Dynamics}.}

\subjclass[2020]{53C22, 37D40}

\keywords{Magnetic Killing tensors, Symmetric Killing tensors, magnetic flow, first integrals, complete integrability, Lie groups.}



\begin{abstract}  In this work we introduce a new family of symmetric tensors generalizing Killing tensors, that we call magnetic Killing tensors. We make use of them to construct first integrals for the magnetic flow associated to a given magnetic field.  We apply the results to prove integrability of some invariant  magnetic flows (either exact or non-exact) on some 2-step nilmanifolds: the Kodaira-Thurston manifold and  Heisenberg nilmanifolds of higher dimensions.

\end{abstract}

\maketitle

\setcounter{tocdepth}{1}
\tableofcontents

\section{Introduction}\label{sec:intro}
From the mechanical perspective, the motion of
a charged particle experiencing the action of a force is described by an equation on a Riemannian manifold $(M, g)$ of the form:
$$
\nabla_{\gamma'}\gamma'= q F \gamma',$$
where $q$ represents the charge, $\nabla$ is the Levi-Civita
connection of $g$, $F$ is a skew-symmetric $(1,1)$-tensor such that the corresponding 2-form $\omega^F:=g(F\cdot, \cdot)$ is
closed, and $\gamma$ is a curve on $M$ called {\em magnetic geodesic}, or magnetic trajectory. This notion stems from electromagnetism theory. 

A particular case occurs whenever the force is trivial. In this case the solution curves are geodesics. The corresponding flow can be studied on the tangent bundle $\Ta M$ (or, equivalently, on the cotangent bundle $\Ta^*M$). Solutions to the equation above are related to integral curves of a Hamiltonian field defined on $\Ta M$.  First integrals of the geodesic flow are functions $f$ defined on $\Ta M$ which are constant along geodesics, in the sense that $t\mapsto f(\gamma'(t))$ is constant for every geodesic $\gamma$.   The first integrals of the geodesic flow which are polynomial in the momenta give rise to {\em Killing tensors} on $M$, and have been intensively studied both in the mathematics and physics literature.

In the present work we focus on the integrability of magnetic flows. There are different tools to deal with  the integrability question. For instance, in \cite{DGJ1},  Dragovi\'c, Gaji\'c and  Jovanovi\'c prove complete integrability of the magnetic flow on spheres $S^{n-1}$ for a constant homogeneous magnetic field in $\mathbb R^n$ if $n\leq 6$ and conjecture the integrability for every $n$. For $n=5,6$ the first integrals are polynomials of degree at most three. Furthermore, in \cite{DGJ2}  the authors provide a Lax representation of the equations of motion and prove complete integrability of those systems for any $n$. The integrability is provided via first integrals of degree one and two. 
In \cite{BKM},  Bolsinov,  Konaev and Matveev prove integrability of the magnetic flow on the sphere with a constant 2-form by reducing the study to the so-called degenerate Neumann system, which is known to be integrable by means of first integrals linear and quadratic in momenta. Other explicit integrable examples with quadratic in momenta first integrals can be constructed using the Maupertuis' principle, see \cite{ABM}.  In 2019, Agapov and Valyuzhenich discussed polynomial integrals of magnetic flows on the 2-torus at several energy levels \cite{AV}.

On the other side, it is known that the integrability  of the geodesic flow imposes obstructions on the topology of the supporting compact manifold. In fact,  Taimanov \cite{Ta} showed  topological obstructions for real-analytic manifolds supporting real-analytic first integrals of the geodesic flow. This does not avoid however the existence of  smoothly integrable geodesic flows on compact spaces  that do not satisfy the above obstructions, see for instance the work of Butler \cite{Bu}. In particular, most first integrals in loc. cit. are constructed from Killing vector fields. A main question that motivates this work is the following: 

\smallskip

{\em Can one define in an analogous way first integrals of a magnetic flow on $\Ta M$ by means of tensor fields on $M$?}

\smallskip

First integrals play a fundamental role in the integrability question. In the case of magnetic flows,  integrability  is  more complicated  than in the geodesic flow situation. Actually this depends on  the energy level and this is related  to the fact that a constant reparametrization of a magnetic geodesic is not a solution of the corresponding magnetic equation. The nature of the Lorentz force $F$ plays a role in the integrability question. On the other hand, in view of the topological obstructions to have real-analytic integrability, it is expected that such obstructions could occur for some magnetic flows. Indeed a natural question is:

\smallskip

{\em For which kind of magnetic fields is there  such obstruction, and which other features could affect the integrability of a given magnetic flow?}

\smallskip

Furthermore, one suspects a relationship with   the critical value of Ma\~n\'e that  makes evident different dynamical behaviors for exact 2-forms. 

\smallskip

 In the present paper we obtain the characterization of symmetric tensors on a Riemannian manifold $(M,g)$ which give rise to first integrals of the magnetic flow associated to a magnetic field. These tensors are called {\em magnetic Killing tensors}, and generalize symmetric Killing tensors to the magnetic setting. However, in contrast to Killing tensors, whose components in any fixed degree are still Killing, magnetic Killing tensors are in general sums of symmetric tensors of different degrees, whose components in any fixed degree are no longer magnetic Killing tensors. 

 We then illustrate the general theory by the study of the  magnetic flow on Lie groups equipped with a left-invariant metric. We consider left-invariant magnetic fields and construct  first integrals of the magnetic flow associated to magnetic Killing tensors. In this way we obtain several examples of compact Riemannian manifolds with non-trivial force $F$ whose corresponding magnetic flows are completely integrable. 

 Explicitly, we prove the complete integrability of  magnetic flows on Kodaira-Thurston manifolds at all energy levels whenever the Lorentz force is invariant and has rank two (and therefore does not correspond to a symplectic structure). Indeed, we start by constructing the first integrals on  $\Ta (H_3\times \mathbb R)$, which descend to compact quotients by lattices of $H_3\times \mathbb R$. The first integrals that we construct are polynomial in the momenta on the simply connected Lie group, but those descending to the compact quotients are analytical only if the invariant Lorentz force is not exact, or at low  energy levels when the Lorenz force is exact. 
 
For trivial extensions of Heisenberg groups $H_{2n+1}$ endowed with a left-invariant metric and left-invariant magnetic fields, we  construct examples of Liouville integrable magnetic flows. The Lorentz forces we consider correspond to skew symmetric derivations acting trivially on the center. As above,  we start by working at the Lie group level, and then we induce corresponding first integrals on quotients by some lattices of $H_{2n+1}\times \mathbb R^k$.

In all cases, the first integrals at the Lie group level are linear or quadratic in the momenta. Whenever we induce them to the quotient spaces, we lose the polynomial property. However, the theory exposed here seems to allow the construction of polynomial first integrals of higher degrees. To this end it would be interesting to find  magnetic Killing tensors of higher degree which are indecomposable in the sense of \cite{dBM20} or \cite[Definition 2.3]{BdBM25}, that is, which are not polynomial expressions in the metric tensor and the magnetic Killing vector fields. 

\section{Magnetic Killing tensors} \label{sec:preliminaries}

The aim of  this section is to introduce {\em magnetic Killing tensors}, which  enable the construction of first integrals for the magnetic flow on a Riemannian manifold $(M,g)$ for a given magnetic field. Later this will specified on 2-step nilpotent Lie groups and induced  compact quotients. 

We start with  the formalism introduced in \cite{HMS} for symmetric tensors. For the convenience of the reader,
we recall here the standard definitions and formulas which are relevant in the sequel.

Let $(M,g)$ denote a Riemannian manifold of dimension $n$ and let $\Ta M$ denote the tangent bundle. Elements of the $l$-fold symmetric tensor product of  $\Ta M$, denoted by $\Sym^l \Ta M \subset \Ta M^{\otimes_l}$, are linear combinations of symmetrized tensor products
$$v_1 \cdot \hdots \cdot v_l :=\displaystyle{\sum}_{\sigma \in S_l} 
v_{\sigma(1)} \otimes \hdots \otimes v_{\sigma(l)},$$
where $v_1, \hdots , v_l$ are tangent vectors in $\Ta M$.

Let $\{e_i\}$ denote a local orthonormal frame of $\Ta M$. Using the metric $g$,
we  identify $\Ta M$ with $\Ta^*M$ and thus $\Sym^2 \Ta^*
M \simeq  \Sym^2 \Ta M$. 
The scalar product $g$
induces a scalar product, also denoted by $g$, on $\Sym^p \Ta M$ defined by
$$g(v_1 \cdot \hdots\cdot v_l, w_1 \cdot \hdots \cdot w_l) := \displaystyle\sum_
{\sigma \in S_l}
 g(v_1, w_{\sigma(1)}) \cdot \hdots \cdot  g(v_l, w_{\sigma(l)}).$$
Using this scalar product, every section $K$ of $\Sym^p \Ta M$ can be identified with a polynomial
map of degree $l$ on $\Ta M$, defined by the formula $K(v_1, \hdots,  v_l) = g(K, v_1 \cdot \hdots \cdot  v_l)$. The
metric adjoint of the bundle homomorphism $v\cdot  : \Sym^l \Ta M \to \Sym^{l+1} \Ta M$, $K \to  v \cdot  K$ is the
contraction map $ v\lrc : \Sym^{l+1} \Ta M \to  \Sym^l \Ta M$, $K \to  v\lrc K$ defined by 
$$(v\, \lrc\, K)(v_1,\hdots, v_l) := K(v, v_1, \hdots , v_l).$$

Let $\nabla$ denote the Levi-Civita connection for the metric $g$ and let $\Gamma(\Sym^l\Ta M)$ denote the set of sections of $\Sym^l \Ta M$. Consider the differential operators 
$$d^s : \Gamma(\Sym^l \Ta M) \to \Gamma(\Sym^{l+1} \Ta M), \qquad K \to 
\sum e_i \cdot  \nabla_{e_i}K,$$
$$\delta : \Gamma(\Sym^{l+1} \Ta M) \to \Gamma(\Sym^l \Ta M), \qquad K \to   - \sum e_i \lrc  \nabla_{e_i}K,$$ 
where $d^s$ the (symmetric) differential and  $\delta$, the formal adjoint of $d^s$, is the divergence operator. 

The operator $d^s$ coincides with the usual diferential on functions (seen as elements of $\Gamma(\Sym^0\Ta M)$), and acts as a  derivation on symmetric products. In particular
$$d^s(fK)=df\cdot K+ f d^sK.$$
for every differentiable map $f:M \to \RR$ and section $K$ of $\Sym^l \Ta M$. 

As usual, we denote by $\chi(M)=\Gamma(\Sym^1\Ta M)$ the differentiable sections of $\Ta M$, that is the vector fields on $M$.

\begin{lem}\label{lem11} For every $K\in \Gamma(\Sym^l\Ta M)$ and for every $X\in \chi(M)$, the following relation holds:
	$$(d^sK)(\underbrace{X, \hdots,  X}_{l+1})=(l+1)(\nabla_XK)(\underbrace{X, \hdots, X}_{l}).$$
\end{lem}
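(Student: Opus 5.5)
Evaluate $d^sK=\sum_i e_i\cdot\nabla_{e_i}K$ on $(X,\dots,X)$ using the conventions of \cite{HMS} fixed above: a symmetric tensor is identified with its polynomial via the scalar product on $\Sym^{l+1}\Ta M$, and contraction is the adjoint of symmetric multiplication. The whole argument reduces to a pointwise algebraic identity. For $v\in \Ta M$ and $L\in \Sym^l \Ta M$,
$$(v\cdot L)(X,\dots,X)=(l+1)\,g(v,X)\,L(X,\dots,X).$$
Once this holds, apply it with $v=e_i$ and $L=\nabla_{e_i}K$, which is still a section of $\Sym^l\Ta M$ since $\nabla$ preserves the symmetric bundles. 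Summing over $i$ gives
$$(d^sK)(X,\dots,X)=(l+1)\sum_i g(e_i,X)\,(\nabla_{e_i}K)(X,\dots,X).$$
Because $\nabla_{e_i}K$ is tensorial in $e_i$, the sum collapses to $(\nabla_XK)(X,\dots,X)$, which is the claim.

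To prove the identity, write $(v\cdot L)(X,\dots,X)=g(v\cdot L, X^{l+1})$, where $X^{l+1}=X\cdot\ldots\cdot X$. Use adjointness to rewrite this as $g(L, v\lrc X^{l+1})$.

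The contraction $v\lrc$ acts on symmetric products as a derivation:
$$v\lrc (w_1\cdot\ldots\cdot w_{l+1})=\sum_j g(v,w_j)\,w_1\cdot\ldots\widehat{w_j}\ldots\cdot w_{l+1}.$$
Applied to $X^{l+1}$ this gives $(l+1)g(v,X)X^l$. Then $g(L,X^l)=L(X,\dots,X)$, which proves the identity.

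The main obstacle is the normalization bookkeeping. The symmetrized product here is an unnormalized sum over $S_l$, and the scalar product on $\Sym^l$ carries a matching sum over permutations. So I will check the derivation formula for $v\lrc$ directly from the definitions rather than quote it. I will compare $g(v\cdot w_1\cdots w_l,\,u_1\cdots u_{l+1})$ with $\sum_j g(v,u_j)\,g(w_1\cdots w_l,\,u_1\cdots\widehat{u_j}\cdots u_{l+1})$. Splitting the permutations in $S_{l+1}$ according to which $u_j$ is paired with $v$ shows these agree, which fixes the constant.

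As a sanity check on the constant, take $l=0$. Then $K=f$ is a function, $d^sf=df$, and the formula reads $df(X)=\nabla_Xf$, consistent with the factor $l+1=1$. For $l=1$, $(e_i\cdot Y)(X,X)=2g(e_i,X)g(Y,X)$, consistent with the factor $2$.
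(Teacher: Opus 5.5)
Your proposal is correct and follows the paper's proof. Both expand $d^sK=\sum_i e_i\cdot\nabla_{e_i}K$ on $(X,\dots,X)$, use $(v\cdot L)(X,\dots,X)=(l+1)\,g(v,X)\,L(X,\dots,X)$, and collapse $\sum_i g(e_i,X)\nabla_{e_i}K$ to $\nabla_XK$. The only difference is that you justify the factor $l+1$ through the adjointness of $v\cdot$ and $v\lrc$ and the derivation formula for contraction, checked against the unnormalized conventions, whereas the paper uses that step without comment.
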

\begin{proof}
As above let $\{e_i\}$ denote a orthonormal local frame. For every $X\in \chi(M)$, by applying the definition one gets
$$\begin{array}{rcl}
	(d^sK)(\underbrace{X, \hdots, X}_{l+1}) & = & \displaystyle \sum_i (e_i\cdot \nabla_{e_i}K)(\underbrace{X, \hdots, X}_{l+1})\\
	& = & (l+1)\displaystyle \sum_i g(e_i,X)\nabla_{e_i}K(X, \hdots, X) \\
	& = & (l+1)(\nabla_XK)(\underbrace{X,\hdots, X}_l).
\end{array}
$$
\end{proof}

Let $\phi:\Ta M \to \RR$ be a differentiable function on the tangent bundle. If the restriction of $\phi$ to the tangent space $\Ta_pM$ is a homogeneous polynomial of degree $l$ for every $p\in M$, then it determines a  section $K_\phi$ of $\Sym^l\Ta M$ via the formula
\begin{equation}\label{morphism}
	K_\phi(v,\hdots, v):=l!\, \phi(v),\qquad\forall v\in\Ta M.
\end{equation}
Conversely, every  section $K$ of $\Sym^l \Ta M$ determines a function $\phi_K:\Ta M \to \RR$ whose restriction to each tangent space is a homogeneous polynomial of degree $l$, by the formula
\begin{equation}\label{morphism1}
	\phi(v):= \frac{1}{l!}\,K(v,\hdots, v),\qquad\forall v\in\Ta M.
\end{equation}

\begin{lem}\label{lem1} The map  $K \longrightarrow \phi_K$ in \eqref{morphism1} is an injective homomorphism of algebras. 
	\end{lem}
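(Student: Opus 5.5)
The plan is to check linearity, injectivity and multiplicativity separately. The algebra structure on $\bigoplus_l \Gamma(\Sym^l \Ta M)$ is given by the symmetric product, and the algebra of functions on $\Ta M$ has pointwise multiplication. All three properties are pointwise statements, so I will fix $p\in M$ and work in the Euclidean vector space $(\Ta_pM,g)$ with an orthonormal basis $\{e_i\}$.

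Linearity of $K\mapsto \phi_K$ in each degree is immediate from \eqref{morphism1}, since $K(v,\hdots,v)=g(K,v\cdot\hdots\cdot v)$ is linear in $K$. I extend to the direct sum by linearity.

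For injectivity, I will first note that $\phi$ maps $\Sym^l$ into homogeneous polynomials of degree $l$. Hence it suffices to treat each degree separately: a sum of homogeneous polynomials of distinct degrees vanishes only if each component does. So suppose $K\in\Sym^l \Ta_pM$ satisfies $K(v,\hdots,v)=0$ for all $v$. Writing $K(v_1,\hdots,v_l)=g(K,v_1\cdot\hdots\cdot v_l)$, this says $K$ is orthogonal to all powers $v^l:=v\cdot\hdots\cdot v$. By the polarization identity, which expresses $v_1\cdot\hdots\cdot v_l$ as a linear combination of $(\epsilon_1v_1+\dots+\epsilon_lv_l)^l$ with $\epsilon_j\in\{0,1\}$, the powers span $\Sym^l\Ta_pM$. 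Therefore $K\perp \Sym^l\Ta_pM$, so $K=0$.

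For multiplicativity, take $K\in\Sym^k$ and $L\in\Sym^l$. I need $\phi_{K\cdot L}=\phi_K\phi_L$, that is,
$$\frac{1}{(k+l)!}(K\cdot L)(v,\hdots,v)=\frac{1}{k!\,l!}K(v,\hdots,v)\,L(v,\hdots,v).$$
By bilinearity and the spanning argument above, it suffices to take $K=a_1\cdot\hdots\cdot a_k$ and $L=b_1\cdot\hdots\cdot b_l$. Then $K\cdot L=a_1\cdot\hdots\cdot a_k\cdot b_1\cdot\hdots\cdot b_l$; here the symmetrized product of symmetrized products agrees with the full symmetrization, with the paper's unnormalized convention. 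This is the point to double check, since a normalization factor might appear. Using the definition of the scalar product with $w_j=v$ for all $j$, one gets $(u_1\cdot\hdots\cdot u_m)(v,\hdots,v)=m!\prod_j g(u_j,v)$. Hence the left side equals $\prod_i g(a_i,v)\prod_j g(b_j,v)$, and the right side equals $\frac{k!\,l!}{k!\,l!}\prod_i g(a_i,v)\prod_j g(b_j,v)$, so the two sides agree.

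The main obstacle is bookkeeping of the combinatorial factors: making sure the product on symmetric tensors is associative with the unnormalized sum over $S_l$. That is, I must confirm that $(a_1\cdot\hdots\cdot a_k)\cdot(b_1\cdot\hdots\cdot b_l)$ equals $a_1\cdot\hdots\cdot b_l$ without an extra $k!\,l!$. If an extra factor appeared, the normalization $1/l!$ in \eqref{morphism1} would need adjusting. To settle it, I will interpret the symmetric product via the identification with polynomials, i.e. via the formula $(u_1\cdot\hdots\cdot u_m)(v,\hdots,v)=m!\prod g(u_j,v)$, which is consistent in all degrees.
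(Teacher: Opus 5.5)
Your argument is correct, but it does not follow the paper: the paper gives no argument and simply cites \cite[Lemma 5.1]{CLMS}, while you verify linearity, injectivity and multiplicativity directly on each fiber.

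The key identity in your approach is $(u_1\cdot\hdots\cdot u_m)(v,\hdots,v)=m!\prod_j g(u_j,v)$. It says that $\phi_{u_1\cdot\hdots\cdot u_m}=\prod_j g(u_j,\cdot)$. So $K\mapsto\phi_K$ is, on each fiber, the canonical map from the symmetric algebra of $\Ta_pM$ to polynomial functions on $\Ta_pM$ induced by $u\mapsto g(u,\cdot)$. Your polarization step is the standard proof that this map is injective over the reals.

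Your route gives a self-contained proof. It also makes explicit why the factor $1/l!$ in \eqref{morphism1} is the right one for the unnormalized symmetrization and the scalar product of \cite{HMS}; the citation leaves this matching of conventions to the reader. The same polarization step shows that \eqref{morphism} determines $K_\phi$ uniquely. The citation buys only brevity.

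On the normalization you flag, do not settle it by ``interpreting the product via polynomials'': that would make multiplicativity a tautology. Instead, in the formalism of \cite{HMS} that the paper follows, $v_1\cdot\hdots\cdot v_l$ is the $l$-fold product in the symmetric algebra. Associativity then gives $(a_1\cdot\hdots\cdot a_k)\cdot(b_1\cdot\hdots\cdot b_l)=a_1\cdot\hdots\cdot a_k\cdot b_1\cdot\hdots\cdot b_l$ with no extra factor. On tensors this corresponds to $K\cdot L=\frac{1}{k!\,l!}\sum_{\sigma\in S_{k+l}}\sigma(K\otimes L)$, which you can check directly on decomposable $K$ and $L$. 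With this, your computation is complete.
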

	\begin{proof} Follows directly from \cite[Lemma 5.1]{CLMS}.
	\end{proof}

\begin{example}\label{Exa1}
	Let $g\in \Gamma(\Sym^2\Ta M)$ be the Riemannian metric on $M$. Then $\phi_g(v)=\frac12 v \lrc v \lrc g=\Vert v \Vert^2$.
\end{example}

\begin{defn}\label{magneticfield}
	A {\em  Lorentz force} on $(M,g)$ is a skew-symmetric endomorphism $F$ of $\Ta M$ such that associated the $2$-form
	$$\omega^F(X,Y):=g(FX,Y) \quad \mbox{for all } X,Y\in \Ta M$$
	 is closed. The 2-form $\omega^F$ is called a {\em magnetic field}. 
\end{defn}

Notice that there is a one-to-one correspondence between Lorentz forces and magnetic fields.

The Lorentz force $F$ can be extended to the algebra of symmetric tensors as a $C^\infty(M)$-linear derivation $F_*$. For every $l\geq 1$ and $K_l\in \Gamma(\Sym^l\Ta M)$, one has
$$(F_*K_l)(v, \hdots ,v)= - l K_l(Fv, v, \hdots , v)\quad \mbox{ for all } v\in \chi (M).$$ 

Fix a Lorentz force on the Riemannian manifold $(M,g)$. A differential curve $\gamma:(-\varepsilon, \varepsilon)\to M$ is called a {\em magnetic trajectory or magnetic geodesic} whenever
\begin{equation}\label{magneticc}
	\nabla_{\gamma'}\gamma'=F\gamma', 
\end{equation}
where we denote by $\gamma':=\gamma'(t)$ for simplicity.

\begin{defn}
	A differentiable function $\Psi:\Ta M \to \RR$  is called a {\em first integral} of the magnetic flow for the Lorentz force $F$ if $\Psi(\gamma')$ is constant for every magnetic trajectory $\gamma$.
\end{defn}

\begin{example} The function $\phi_g$ of Example \ref{Exa1} is a first integral of the magnetic flow for every Lorentz force $F$.  Indeed, for every magnetic geodesic $\gamma$, one has
	$$\frac{d}{dt}\phi_g(\gamma')=\frac12\frac{d}{dt}g(\gamma',\gamma')=g(\nabla_{\gamma'}\gamma', \gamma')=g(F\gamma', \gamma')=0.$$
	This first integral $\phi_g$ is usually called the {\em energy function} and is denoted by $\En$.
	\end{example}
\begin{prop} \label{prop1} Let $F$ denote a Lorentz force on $(M,g)$ and for $i=0, \hdots, l$ let $K_i\in \Gamma(\Sym^i\Ta M)$ be symmetric tensors with associated functions $\phi_i(v):=\frac{1}{i!}K_i(v, \hdots, v)$ as in \eqref{morphism1}. Then 
	$\phi:=\displaystyle \sum_{i=0}^l \phi_i$
	is a first integral of the magnetic flow if and only if 
	\begin{equation}\label{system}d^sK_i=F_* K_{i+1} \quad \mbox{ for all } i= 0, \hdots, l,
	\end{equation}
	with the convention $K_{l+1}\equiv 0$.
	\end{prop}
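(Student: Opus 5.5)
The plan is to differentiate $\phi(\gamma')$ along an arbitrary magnetic trajectory, sort the result by degree in $\gamma'$, and then use that any tangent vector is the initial velocity of some magnetic trajectory.

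First, fix a magnetic trajectory $\gamma$ and compute $\frac{d}{dt}\phi_i(\gamma')$ for each $i$. Since $\phi_i(\gamma')=\frac{1}{i!}K_i(\gamma',\dots,\gamma')$, the Leibniz rule for the covariant derivative gives
$$\frac{d}{dt}\phi_i(\gamma')=\frac{1}{i!}(\nabla_{\gamma'}K_i)(\gamma',\dots,\gamma')+\frac{i}{i!}K_i(\nabla_{\gamma'}\gamma',\gamma',\dots,\gamma').$$
In the first term I apply Lemma \ref{lem11} with $X=\gamma'$, which yields $\frac{1}{(i+1)!}(d^sK_i)(\gamma',\dots,\gamma')$. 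In the second term I substitute $\nabla_{\gamma'}\gamma'=F\gamma'$ and use the defining formula $(F_*K_i)(v,\dots,v)=-iK_i(Fv,v,\dots,v)$. This turns it into $-\frac{1}{i!}(F_*K_i)(\gamma',\dots,\gamma')$. Both terms are homogeneous of degree $i+1$ in $\gamma'$.

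Next, sum over $i$ and regroup by degree. The degree $j$ part, for $1\le j\le l+1$, is $\frac{1}{j!}\big(d^sK_{j-1}-F_*K_j\big)(\gamma',\dots,\gamma')$, with $K_{l+1}=0$. There is no degree $0$ term, because $d\phi_0/dt$ starts in degree $1$ and $F_*K_0=0$. So
$$\frac{d}{dt}\phi(\gamma')=\sum_{i=0}^{l}\phi_{d^sK_i-F_*K_{i+1}}(\gamma').$$
Sufficiency follows at once: if \eqref{system} holds, this derivative vanishes along every magnetic trajectory.

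For necessity, I use that \eqref{magneticc} is a second-order ODE. Hence for every $p\in M$ and every $v\in\Ta_pM$ there is a magnetic trajectory with $\gamma'(0)=v$. Evaluating the identity above at $t=0$ shows that the polynomial $v\mapsto\sum_i\phi_{d^sK_i-F_*K_{i+1}}(v)$ vanishes identically on each $\Ta_pM$. Applying it to $sv$ for $s\in\RR$ gives a polynomial in $s$ that is identically zero, so each homogeneous component vanishes separately. Lemma \ref{lem1} (injectivity) then gives $d^sK_i=F_*K_{i+1}$ for every $i$.

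The only delicate points are the combinatorial factors and signs in the first step. The factor $(i+1)$ from Lemma \ref{lem11} must combine with $1/i!$ into $1/(i+1)!$, and the factor $i$ from differentiating $K_i$ must match the convention for $F_*$. I will check these carefully, but I do not expect a genuine obstacle there.
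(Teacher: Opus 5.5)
Your proposal is correct and follows essentially the same computation as the paper: differentiate along a magnetic trajectory, use Lemma \ref{lem11} and the definition of $F_*$, and regroup by degree to get $\sum_{i}\frac{1}{(i+1)!}(d^sK_i-F_*K_{i+1})(\gamma',\dots,\gamma')$. You also spell out the necessity direction, which the paper leaves implicit: every tangent vector is the initial velocity of a magnetic trajectory, homogeneous components separate via $v\mapsto sv$, and Lemma \ref{lem1} gives injectivity.
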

	\begin{proof} Let $\gamma$ be a magnetic trajectory for $F$. By applying properties and  Lemma \ref{lem11}, one has
		$$
		\begin{array}{rcl}
			\gamma'(\phi(\gamma')) & = & \displaystyle \sum_{i=0}^l \gamma'(\phi_i(\gamma'))\\
			& = & \displaystyle \sum_{i=0}^l \frac{1}{i!}\gamma'(K_i(\gamma', \hdots, \gamma'))\\
		& = & \displaystyle \sum_{i=0}^l \frac{1}{i!} (\nabla_{\gamma'}K_i)(\gamma', \hdots, \gamma') + \displaystyle \sum_{i=1}^l \frac{1}{(i-1)!}K_i(F\gamma', \gamma', \hdots, \gamma')\\
		& = & \displaystyle \sum_{i=0}^l \frac{1}{(i+1)!} d^sK_i(\underbrace{\gamma', \hdots, \gamma'}_{i+1}) - \displaystyle \sum_{i=1}^l \frac{1}{i!}(F_*K_i)(\underbrace{\gamma', \hdots, \gamma'}_i)\\
	& = & 	\displaystyle \sum_{i=0}^l \frac{1}{(i+1)!} d^sK_i(\underbrace{\gamma', \hdots, \gamma'}_{i+1}) - \displaystyle \sum_{i=0}^{l-1} \frac{1}{(i+1)!}(F_*K_{i+1})(\underbrace{\gamma', \hdots, \gamma'}_{i+1})\\
	& = & \displaystyle \sum_{i=0}^l \frac{1}{(i+1)!} \left( d^sK_i-F_*K_{i+1}\right) (\underbrace{\gamma', \hdots, \gamma'}_{i+1}).
	\end{array}
$$
So, $\phi$ is a first integral of the magnetic flow if and only if the system \eqref{system} holds. 
	\end{proof}

\begin{defn}
	A symmetric tensor $K=\sum_{i=0}^l K_i$ with $K_i\in \Gamma(\Sym^i\Ta M)$ satisfying the system \eqref{system} is called a {\em magnetic Killing tensor}. 
\end{defn}

By the above proposition, magnetic Killing tensors correspond to first integrals of the magnetic flow which are polynomial in the momenta.

\begin{rem} Assume $\phi$  as above has the form $\phi=\phi_1+\phi_0$ is a first integral for $F$, so that  $\phi_1=:\xi$ is a vector field  on $M$ and $\phi_0=:f$ is a function on $M$. From the proposition above, $\phi$ is a first integral of the magnetic flow if and only if
	$d^s \xi\equiv 0,\, df= F(\xi)^\flat.$ Here $F(\xi)^\flat\in \Ta^*M$ is the metric dual of $F(\xi)$, defined by $F(\xi)^\flat(X)=g(F(\xi),X)$ for every $X\in \Ta M$. 
	
By Lemma \ref{lem11}, $d^s\xi =0$ is equivalent to, $g(\nabla_X\xi,X)=0$ for every $X$, i.e to the fact that $\xi$ is a Killing vector field on $M$. 
	\end{rem}
\begin{defn}\label{magneticKilling}
A {\em  magnetic Killing} vector field on $M$ for the Lorentz force  $F$ is a tensor $\xi^F=\xi+f$ where $\xi\in \chi(M)$ is a Killing vector field  and $f:M \to \RR$ satisfies $df= F(\xi)^\flat$.
\end{defn}

By Proposition \ref{prop1} (applied to $l=1$), every magnetic Killing vector field is a first integral of the magnetic flow with Lorenz force $F$.

\begin{rem}
The existence of magnetic Killing vector fields is not guaranteed by the existence of Killing vector fields. We shall see examples of this in the next sections.
\end{rem}
\section{The magnetic flow on Lie groups}

Let $N$ denote a  Lie group with Lie algebra  $\nn$.
Thus,  $\nn$  is the Lie algebra of left-invariant vector fields, that is, satisfying ${L_h}_*(v)=v\circ L_h$, where $L_h:N \to N$ denotes the translation on the left by $h\in N$. The Lie algebra can be identified with $\nn\simeq \Ta_eN$, where $e$ is the identity element of $N$.
For every $p\in N$, any vector in $\Ta_p N$ can be written as $pv:={L_p}_*(v)$ for some $v\in \nn\simeq \Ta_e N$. 

Besides left-translations, other diffeomorphisms of $N$ are given by right-translations and conjugation maps, $R_h, I_h:N \to N$ respectively
$$R_h(p)=ph \qquad I_h(p)=h p h^{-1}, \qquad\forall p\in N.$$
The corresponding differentials at $e$ are denoted by ${R_h}_*:\nn\to\Ta_hN$ and $Ad(h):\nn \to \nn$. The latter, called the adjoint map, is a Lie algebra isomorphism.

Analogously to left-invariant vector fields, one has right-invariant vector fields which satisfy ${R_h}_*(v)=v \circ R_h$ for all $h\in N$, and which are also determined by their value at $e\in N$.  
In terms of left-invariant vector fields, every right-invariant vector field defined by some $v\in \Ta_eN\simeq \nn$ can be written 
\begin{equation}\label{barv}
    \bar{v}_p=p (Ad(p^{-1})v),\qquad\forall p\in N.
    \end{equation}

One has the relation $\Ad(\exp X)=e^{\ad(X)}$ where $\ad(X):\nn \to \nn$ denotes the derivation given by $\ad(X)(Y)=[X,Y]$ for $X,Y\in \nn$. 

Recall that the Lie bracket of  differentiable vector fields $X,Y$ is defined  in terms of the associated flows $\varphi^X$, $\varphi^Y$  by
$$[X,Y]_p=\left.\frac{d}{dt}\right|_{t=0} \left.\frac{d}{ds}\right|_{s=0}
\varphi^X_{-t}\circ \varphi^Y_s\circ \varphi^X_t(p).$$

To every $X\in \nn$ we associate the ``horizontal'' and ``vertical'' lifts $\tilde{X}, X^*$ which are vector fields on $\Ta N$ defined respectively by the flows $\psi_t^X$ and $\varphi_t^X$, given by
$$\quad \psi_t^X(pV):=p\exp(tX) V \mbox{ for }\widetilde{X},\qquad \mbox{and} \quad \varphi_t^X(pV):=p(V+tX)\mbox{ for } {X^*},$$
for every $p\in N$ and $V\in \nn$.
These vector fields satisfy \begin{equation}\label{Verthor}\pi_*\widetilde{X}=X, \qquad \quad  X^*_V=X_{\pi(V)}, \quad \pi_*(X^*)=0,
	\end{equation}
and the following Lie bracket relations:
\begin{equation}\label{brackets}
    [\widetilde{X}, \widetilde{Y}]_{pV}=\widetilde{[X,Y]}_{pV}\quad [\widetilde{X}, {Y}^*]_{pV}=0 \qquad [{X}^*, {Y}^*]_{pV}=0.\end{equation}
Indeed, 

\begin{eqnarray*}
[\widetilde{X}, \widetilde{Y}]_{pV} & = & \left.\frac{d}{dt}\right|_{t=0} \left.\frac{d}{ds}\right|_{s=0}
I_{\exp (tX)}(\exp (sX) V)\\
& = & \left.\frac{d}{dt}\right|_{t=0} p Ad(\exp( tX))(Y) V = \left.\frac{d}{dt}\right|_{t=0} p \exp(t\ad(X)(Y)) V\\
& = & \widetilde{[X,Y]}_{pV};\\
\\
{	[\tilde{X}, {Y}^*]_{pV}} & = & \left.\frac{d}{dt}\right|_{t=0} \left.\frac{d}{ds}\right|_{s=0}
	\psi^X_{-t}\circ \varphi^Y_s \circ \psi^X_{t}(pV)\\
	& = & \left.\frac{d}{dt}\right|_{t=0} p \exp(tX)\exp(-tX)(sY + V) = \left.\frac{d}{dt}\right|_{t=0} p(V+sY)\\
	& = & 0.
\end{eqnarray*}

Analogously one obtains $[X^*,Y^*]_{pV}=0$ for every $p\in N, V\in \nn$. 

Clearly every vector  $U\in \Ta(\Ta N)$ at $pV$ decomposes uniquely as a direct sum $\widetilde{X}+Y^*$. In fact, by denoting $\pi:\Ta N \to N$, one has
$$\pi_*(U_{pV})= pX\quad \mbox{ for } X\in \nn.$$

Take $Y^*_{pV}=U_{pV}-\widetilde{X}_{pV}$, so we have
$$\Ta_{pV}(\Ta N)=\nn \oplus \ker \pi_*,$$
where we identify $\nn\simeq \Ta_pN=\pi_*\Ta_{pV}(\Ta N)$.

\begin{defn} A symmetric tensor on the Lie group  $N$ is said {\em left-invariant} if it is invariant by translations on the left by elements $h\in N$: $K_p=(L_p)_*K_e$ for every $p\in N$. 
	
	In particular a left-invariant metric on $N$ is a Riemannian metric on $N$ for which translations on the left of the form $L_h$ by elements $h\in N$ are isometries. 
	
	 Fix such metric $\la\,,\,\ra$, whose values are determined by its value at the Lie algebra  $\nn\simeq \Ta_eN$ also denoted by $\la\,,\,\ra$.
	\end{defn} 

The tautological 1-form on $\Ta N$ is given by
\begin{equation}\label{eta}\eta_{pV}(\widetilde{X})=\la X, V\ra, \qquad \eta_{pV}({X}^*)= 0, \quad \forall  p\in N,\ \forall X,V\in \nn.
	\end{equation}
By \eqref{brackets} we easily get for every $p\in N$ and $X,Y,V\in\nn$:
\begin{equation}\label{deta}d\eta_{pV}(\widetilde{X},\widetilde{Y})=-\la V, [X,Y]\ra,\qquad d\eta_{pV}({X}^*,{Y}^*)=0, \qquad d\eta_{pV}(\widetilde{X},{Y}^*)= -\la X, Y\ra.
	\end{equation}

The 2-form $\Omega := - d\eta$ is thus a symplectic form on $\Ta N$. 

A Lorentz force $F$ is called {\em left-invariant} if it is invariant by translations on the left. From now on, we work with left-invariant Lorentz forces. 

To study the magnetic flow we consider the symplectic form defined by
$$\Omega^F:=\Omega + \pi^* \omega^F.$$
By \eqref{deta} one has
$$\begin{array}{rcl}
	\Omega^F(\widetilde{X}, \widetilde{Y}) & = & \la V, [X,Y]\ra + \omega^F(X,Y)\\
		\Omega^F(\widetilde{X}, {Y}^*)  & = & \la X,Y\ra\\
\Omega^F({X}^*, {Y}^*)  & = & 0.
\end{array}
$$ 
\begin{defn}
	The Hamiltonian vector field $X_{\phi}$ in $\Ta(\Ta N)$ associated to a differentiable function $\phi: \Ta N \to \RR$ is defined by $X_{\phi}\lrc \, \Omega^F=d\phi$, or, equivalently, by
	\begin{equation}
		\Omega^F(X_{\phi},U)=U(\phi)\quad \forall U\in \chi(\Ta N)
	\end{equation}

The Poisson bracket of functions $\phi_1, \phi_2:\Ta N\to \RR$ is the function $\{\phi_1, \phi_2\}^F:\Ta N\to \RR$ defined by
$$\{\phi_1, \phi_2\}^F(pV):=\Omega^F(X_{\phi_1}, X_{\phi_2})\quad \forall p\in N,\ \forall V\in \nn.$$ 
If $F=0$, we simply denote the Poisson bracket of $\phi_1, \phi_2$ by $\{\phi_1, \phi_2\}$.
\end{defn}

The Poisson bracket is a Lie bracket on $C^{\infty}(\Ta N)$ and it also satisfies the Leibniz's rule:

$\{\phi_1, \phi_2 \phi_3\}^F=\phi_3\{\phi_1,\phi_2\}^F+\phi_2 \{\phi_1,\phi_3\}^F.$

\begin{lem} \label{l33} Let $K\in \Gamma(\Sym^l\Ta N)$ be a left-invariant symmetric tensor and let $\phi(V):=\frac{1}{l!}K(V, \hdots, V)$ be the corresponding function on $\Ta N$ as in Lemma \ref{lem1}. Then for every $X\in \nn$ one has
	\begin{itemize}
		\item $\widetilde{X}(\phi)=0$;
		\item $X^*(\phi)(pV)=\phi_{X\lrc\, K}=\frac{1}{(l-1)!}K(X,V, \hdots, V)$ for all $l\geq 1$, and $X^*(\phi)(pV)=0$ for $l=0$.
	\end{itemize}
\end{lem}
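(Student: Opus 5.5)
The plan is to compute both derivatives directly from the flows that define $\widetilde{X}$ and $X^*$, using the left-invariant trivialization $\Ta N\simeq N\times\nn$, $pV\mapsto (p,V)$. The key point is that for a left-invariant $K$ the function $\phi$ depends only on the $\nn$-component. Indeed, by definition $K_p=(L_p)_*K_e$. The metric is left-invariant, so the identification of $\Sym^l\Ta N$ with polynomial maps is compatible with $(L_p)_*$. Hence
$$\phi(pV)=\frac{1}{l!}K_p(pV,\hdots,pV)=\frac{1}{l!}K_e(V,\hdots,V),$$
which is independent of $p$. I would state this as the first step, since both items follow from it.

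For the first item, I use the flow $\psi^X_t(pV)=p\exp(tX)V$. This moves the base point to $p\exp(tX)$ and keeps the left-trivialized fibre coordinate $V$ fixed. Therefore
$$\widetilde{X}(\phi)(pV)=\left.\frac{d}{dt}\right|_{t=0}\frac{1}{l!}K_e(V,\hdots,V)=0.$$
For the second item, I use $\varphi^X_t(pV)=p(V+tX)$, which fixes the base point and translates the fibre coordinate. Then
$$X^*(\phi)(pV)=\left.\frac{d}{dt}\right|_{t=0}\frac{1}{l!}K_e(V+tX,\hdots,V+tX).$$
By multilinearity and symmetry of $K_e$, the derivative is $\frac{l}{l!}K_e(X,V,\hdots,V)=\frac{1}{(l-1)!}K(X,V,\hdots,V)$. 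By the definition of the contraction $X\lrc K$ and formula \eqref{morphism1} in degree $l-1$, this equals $\phi_{X\lrc K}(V)$. For $l=0$, $\phi$ is a constant: a left-invariant function on $N$ pulled back to $\Ta N$. So both derivatives vanish.

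The only step needing care is the first one, namely that $\phi$ is constant along $N$ in the left trivialization. Here one must check that left-invariance of $K$ together with left-invariance of $\la\,,\,\ra$ gives $K_p(pV,\hdots,pV)=K_e(V,\hdots,V)$. This is immediate once one writes $K_p(pV,\hdots)=((L_p)_*K_e)((L_p)_*V,\hdots)$ and uses that $(L_p)_*$ is an isometry $\Ta_eN\to\Ta_pN$. The rest is the elementary polarization identity for the derivative of a homogeneous polynomial.
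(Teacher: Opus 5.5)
Your proposal is correct and follows the paper's proof. Both arguments differentiate $\phi$ along the flows $\psi^X_t(pV)=p\exp(tX)V$ and $\varphi^X_t(pV)=p(V+tX)$, use left-invariance to reduce $\phi(pV)$ to $\frac{1}{l!}K_e(V,\hdots,V)$, and then differentiate the homogeneous polynomial; the only difference is that you state the $p$-independence of $\phi$ (via $(L_p)_*$ being an isometry) as an explicit first step, which the paper uses implicitly in its computation of $\widetilde{X}(\phi)$.
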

\begin{proof}
By using the definitions we get
\begin{eqnarray*}
\widetilde{X}(\phi)(pV) & = & \left.\frac{d}{dt}\right|_{t=0} \phi(p \exp(tX) V) =\frac{1}{l!} \left.\frac{d}{dt}\right|_{t=0} K_{p\exp (tX)}K(p\exp (tX) V, \hdots, p\exp (tX) V)\\
& = & \frac{1}{l!} K_{e}K(V, \hdots, V),\\
\\
{{X}^*(\phi)(pV)} & = & \left.\frac{d}{dt}\right|_{t=0} \phi(p(V+tX)) = \frac{1}{l!} \left.\frac{d}{dt}\right|_{t=0} K_{e}K(V+tX, \hdots, V + tX)\\
& = & \left\{ {\begin{array}{ll}
	{\frac{1}{(l-1)!}K(X,V,\hdots, V)} & l\geq 1\\
{0} & l=0 	
\end{array} }\right.
\end{eqnarray*}

\end{proof}

Let $\{e_i\}$ denote an orthonormal basis on $\nn$, and consider the associated vertical and horizontal vector fields $e^*_i, \widetilde{e}_i$ on $\Ta N$. These vector fields form a basis of $\Ta_{pV}(\Ta N)$ for every $p\in N$ and $V\in \nn$.

For a left-invariant symmetric tensor $K\in \Gamma(\Sym^{k+1}\Ta N)$ with associated function $\phi=\phi_K$ as in \eqref{morphism1}, we decompose
$$(X_{\phi})_{pV}=\sum \alpha_i(pV) \,e^*_i  +  \beta_i(pV)\,\widetilde{e}_i,$$
for some smooth functions $\alpha_i$ and $\beta_i$ on $\Ta N$. 
Lemma \ref{l33} implies that for every $j$ one has
$$0=\widetilde{e}_j(\phi)=\Omega^F(X_{\phi}, \widetilde{e}_j)=-\alpha_j(pV) + \sum \beta_i(pV)\left( \la V, [e_i,e_j]\ra + \omega^F(e_i,e_j)\right),$$
$$\frac{1}{k!}K(e_j,V, \hdots, V)=e_j^*(\phi)=\Omega^F(X_{\phi, e_j^*})=\beta_j(pV).$$

These relations give the expression of $(X_{\phi})$ with respect to the above basis of $\Ta(\Ta N)$ at every point $pV\in \Ta N$ (with $p\in N$ and $V\in\nn$):
$$(X_{\phi})_{pV}=\sum_j \frac{1}{k!} K(e_j, V, \hdots, V)\,\tilde{e}_j  + \sum_{i,j}(\frac{1}{k!} K(e_i,V, \hdots, V))(\la V, [e_i,e_j]+\omega^F(e_i,e_j)) \,e^*_j.$$

By using these formulas and the properties of $\Omega^F$ given above one obtains the following:

\begin{cor} Let $K\in \Gamma(\Sym^{k+1}\Ta N)$, $L\in\Gamma(\Sym^{l+1}\Ta N)$ be left-invariant symmetric tensors. Then the Poisson bracket of the associated functions $\phi_K, \phi_L$ satisfies
	\begin{eqnarray*}
	\{\phi_L,\phi_K\}^F(pV) & = & \Omega^F(X_{\phi_K}, X_{\phi_L}) \\
	& = & \sum_{i,j}\frac{1}{k!}\frac1{l!}L(e_j, V, \hdots, V) K(e_i,V\hdots, V)(\la V,[e_i,e_j]+\omega^F(e_i,e_j)).
\end{eqnarray*}
\end{cor}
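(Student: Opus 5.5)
The plan is to compute the bracket directly from the coordinate expression of the Hamiltonian vector fields obtained just before the statement. At a point $pV$ I write $X_{\phi_K}=\sum_j \beta^K_j\,\widetilde e_j+\sum_j\alpha^K_j\, e_j^*$. Here
\[
\beta^K_j=\tfrac1{k!}K(e_j,V,\dots,V),\qquad \alpha^K_j=\sum_i\beta^K_i\,\big(\la V,[e_i,e_j]\ra+\omega^F(e_i,e_j)\big),
\]
and similarly for $X_{\phi_L}$ with $\beta^L_j=\frac1{l!}L(e_j,V,\dots,V)$. Both expansions come from the two identities $\widetilde e_j(\phi)=0$ and $e_j^*(\phi)=\frac1{k!}K(e_j,V,\dots,V)$ of Lemma \ref{l33}. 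I then expand $\Omega^F(X_{\phi_K},X_{\phi_L})$ bilinearly, using the three formulas for $\Omega^F$ on pairs of horizontal and vertical lifts.

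The first observation is that the vertical–vertical terms vanish, since $\Omega^F(X^*,Y^*)=0$. The horizontal–horizontal part gives $\sum_{i,j}\beta^K_i\beta^L_j\,c_{ij}$, where $c_{ij}:=\la V,[e_i,e_j]\ra+\omega^F(e_i,e_j)$. The mixed terms give $\sum_j\beta^K_j\alpha^L_j-\sum_j\alpha^K_j\beta^L_j$, using $\Omega^F(\widetilde X,Y^*)=\la X,Y\ra$, orthonormality, and skew-symmetry.

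Substituting the expressions for $\alpha^K$ and $\alpha^L$, the mixed terms become $\sum_{i,j}\beta^K_j\beta^L_i c_{ij}-\sum_{i,j}\beta^K_i\beta^L_j c_{ij}$. The first of these, after relabelling $i\leftrightarrow j$ and using $c_{ji}=-c_{ij}$, equals $-\sum\beta^K_i\beta^L_jc_{ij}$. The total is therefore $(1-1-1)\sum_{i,j}\beta^K_i\beta^L_jc_{ij}=-\sum_{i,j}\beta^K_i\beta^L_jc_{ij}$.

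This is the claimed formula up to an overall sign. The main obstacle is exactly this bookkeeping of sign conventions: the order $\{\phi_L,\phi_K\}^F=\Omega^F(X_{\phi_K},X_{\phi_L})$ in the statement, and the sign in $\alpha_j$, both depend on the convention $X_\phi\lrc\Omega^F=d\phi$. I would redo this step carefully. A cleaner route is to use the alternative expression $\Omega^F(X_{\phi_K},X_{\phi_L})=X_{\phi_L}(\phi_K)$, which follows from the definition of $X_\phi$, up to the order of arguments. Then only the action of $X_{\phi_L}$ on $\phi_K$ is needed. By Lemma \ref{l33} the horizontal part kills $\phi_K$, and the vertical part gives $\sum_j\alpha^L_j\,\beta^K_j=\sum_{i,j}\beta^L_i\beta^K_j c_{ij}$. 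Relabelling then yields the stated expression with the indices placed as in the statement. I would present this shorter computation as the proof and use the full bilinear expansion only as a consistency check on signs.
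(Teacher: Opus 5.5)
Your route is the paper's own: expand $X_{\phi_K},X_{\phi_L}$ in the frame $\widetilde e_j,e_j^*$ and evaluate $\Omega^F$ on lifts. Your full bilinear expansion is correct and gives $\Omega^F(X_{\phi_K},X_{\phi_L})=-\sum_{i,j}\beta^K_i\beta^L_jc_{ij}$. The flaw is the final ``relabelling'' in your shorter route. From $X_{\phi_L}(\phi_K)=\sum_j\alpha^L_j\beta^K_j=\sum_{i,j}\beta^L_i\beta^K_jc_{ij}$, swapping $i\leftrightarrow j$ gives $\sum_{i,j}\beta^L_j\beta^K_ic_{ji}=-\sum_{i,j}\beta^L_j\beta^K_ic_{ij}$, because $c_{ji}=-c_{ij}$. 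This is exactly the sign you had detected, so your two computations agree, and neither produces the displayed right-hand side. The convention $X_\phi\lrc\Omega^F=d\phi$ cannot absorb the sign either: reversing it negates both $X_{\phi_K}$ and $X_{\phi_L}$ and leaves $\Omega^F(X_{\phi_K},X_{\phi_L})$ unchanged. The discrepancy is genuine. On $N=\RR^2$ with $\omega^F=\rho\,e^{12}$, $K=e_1$, $L=e_2$, you get $X_{\phi_K}=\widetilde e_1+\rho e_2^*$ and $X_{\phi_L}=\widetilde e_2-\rho e_1^*$. Hence $\Omega^F(X_{\phi_K},X_{\phi_L})=\rho-\rho-\rho=-\rho$, while the right-hand side is $L(e_2)K(e_1)c_{12}=\rho$.

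So the middle term of the statement has its arguments in the wrong order relative to the paper's definition $\{\phi_1,\phi_2\}^F:=\Omega^F(X_{\phi_1},X_{\phi_2})$. The paper only says the formula follows from these expressions, so the slip is in the statement, not in your method. What is true, and what you should prove, is the outer equality
\[
\{\phi_L,\phi_K\}^F=\Omega^F(X_{\phi_L},X_{\phi_K})=X_{\phi_K}(\phi_L)=\sum_j\alpha^K_j\beta^L_j=\sum_{i,j}\beta^K_i\beta^L_j\,c_{ij}.
\]
Here only the vertical part of $X_{\phi_K}$ contributes, by Lemma \ref{l33}. This is the stated right-hand side verbatim, with no index swap needed. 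Do not force agreement with $\Omega^F(X_{\phi_K},X_{\phi_L})$ through a relabelling that silently drops a sign. Instead, record that this middle expression equals minus the right-hand side.
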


With the same assumptions, the formula  above can be read in the following way:
\begin{equation}\label{poissonKL}
	\begin{split}
	\{\phi_L, \phi_K\}^F & = \sum \phi_{e_j\lrc L} \cdot \phi_{e_i\lrc K}(\la V, [e_i,e_j]+\omega^F(e_i,e_j))\\
	& =  \phi_{\{L,K\}^F},
	\end{split}
\end{equation}
where by definition 
$$\{L,K\}^F:=\sum_{i,j} (e_i\lrc L) \cdot (e_j \lrc K) (\la V, [e_i,e_j]\ra + \omega^F(e_i,e_j)).$$

For $f\in C^{\infty}(N)$, consider  $\phi_f=f\circ \pi$. Then
$$X^*(\phi_f)=0 \qquad \widetilde{X}(\phi_f)=p\exp X (f\circ \pi).$$
We now compute the Hamiltonian vector field for $\phi_f$. As before write
$$X_{\phi_f}=\sum_i \alpha_i\,e_i^*  +\beta_i\, \widetilde{e}_i ,$$
and compute
$$0=e^*_j(\phi_f)=\Omega^F(X^F_{\phi_f}, e^*_j)=\sum_i \beta_i \delta_{ij}=\beta_j$$
$$(pe_j)(f) = \widetilde{e}_j(\phi_f)(pV)=\Omega^F(X_{\phi_f}, \widetilde{e}_j)=-\alpha_j(pV).$$
Thus, the Hamiltonian vector field corresponding to $\phi_f$ is given by
$$(X_{\phi_f})_{pV}= -\sum_i (pe_i)(f)\, e_i^*, $$
where $(pe_i)(f)=\left.\frac{d}{dt}\right|_{t=0} f(p \exp(te_i))$. This, together with Lemma \ref{l33}, implies that for $K\in \Gamma(\Sym^{k}\Ta N)$ left-invariant and $f\in C^{\infty}(N)$ as above, one has
$$\{\phi_K, \phi_f \}^F=X_{\phi_f}(\phi_K)= - \sum_i \phi_{e_i \lrc K} L_p^*(df)(e_i).$$

\


Moreover, by the properties of the Poisson bracket, it follows
$$\{\phi_{K}, \phi_{gL}\}^F=\{\phi_K,\phi_g\}^F\phi_L+\phi_g\{\phi_K,\phi_L\}^F.$$

\


 For any $\xi\in\nn$ we will denote by $\bar\xi$ the right-invariant vector field whose value at $e$ is $\xi$. In other words, 
$\bar{\xi}(p)={R_p}_*(\xi)$, where $R_p$ denotes the translation on the right by $p\in N$. 
The flow of $\bar\xi$ is given by $\varphi^{\bar\xi}_t(p)=R_p (\exp(t\xi))$, i.e. $\varphi^{\bar\xi}_t$ is the left translation by $\exp(t\xi)$.
Since translations on the left by elements of the Lie group are isometries of $(N, \la\,,\,\ra)$, any right-invariant vector field is a Killing vector field.

Note that $\exp(t\xi)p=pp^{-1}\exp(t\xi)p$, so that
 $\left.\frac{d}{dt}\right|_{t=0}\exp(t\xi)p= \left.\frac{d}{dt}\right|_{t=0} L_p \Ad(p^{-1})\exp(t\xi),$
which shows that the value at $p$ of the right-invariant vector field $\bar\xi$determined by $\xi$ is equal to the value at $p$ of the left-invariant vector field determined by $\Ad(p^{-1})\xi$. This was already noticed in \eqref{barv}.

The following result proves the existence of magnetic Killing vector fields induced by right-invariant vector fields, using \ref{prop1}.

\begin{cor}\label{coriv}
	Every right-invariant vector field $\bar \xi$ on the simply connected Lie group $N$ determines a magnetic first integral of the magnetic flow associated to any left-invariant Lorentz force $F$. 
\end{cor}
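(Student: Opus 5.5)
We show that $\bar\xi$ is the vector part of a magnetic Killing vector field in the sense of Definition \ref{magneticKilling}. Proposition \ref{prop1} with $l=1$ then says that $\phi=\phi_{\bar\xi}+f$ is a first integral of the magnetic flow. We already know that $\bar\xi$ is Killing, since its flow consists of left translations, which are isometries of the left-invariant metric. So the only task is to produce a function $f:N\to\RR$ with
$$df=F(\bar\xi)^\flat=\omega^F(\bar\xi,\cdot)=\bar\xi\lrc\,\omega^F.$$

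By Cartan's formula,
$$d(\bar\xi\lrc\,\omega^F)=\mathcal L_{\bar\xi}\omega^F-\bar\xi\lrc\, d\omega^F.$$
The second term vanishes because $\omega^F$ is closed, by the definition of a Lorentz force. For the first term, the flow of $\bar\xi$ is $\varphi^{\bar\xi}_t=L_{\exp(t\xi)}$. Since $F$ and $g$ are left-invariant, $\omega^F$ is left-invariant, so $(L_{\exp t\xi})^*\omega^F=\omega^F$ and hence $\mathcal L_{\bar\xi}\omega^F=0$. Therefore the $1$-form $\bar\xi\lrc\,\omega^F$ is closed.

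Because $N$ is simply connected, $H^1_{dR}(N)=0$, so this closed form is exact and we can write $\bar\xi\lrc\,\omega^F=df$ for some $f\in C^\infty(N)$. Concretely, $f(p)=\int_{\gamma}\bar\xi\lrc\,\omega^F$ along any path $\gamma$ from $e$ to $p$. Thus $\bar\xi+f$ is a magnetic Killing vector field, and the first integral is
$$\Psi(pV)=\la \Ad(p^{-1})\xi,V\ra+f(p),$$
where we use \eqref{barv} to express $\bar\xi$ in terms of left-invariant fields.

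The main point to check is the invariance $\mathcal L_{\bar\xi}\omega^F=0$. This requires identifying the flow of the right-invariant field as left translations, which the paper already noted, and observing that left-invariance of $F$ together with the metric gives left-invariance of $\omega^F$. The sign convention in $df=F(\xi)^\flat$ only changes $f$ by a sign and does not affect existence. Everything else is a direct application of Proposition \ref{prop1}.
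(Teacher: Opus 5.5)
Your proposal is correct and follows essentially the same route as the paper. You reduce via Proposition \ref{prop1} with $l=1$ to solving $df=\bar\xi\lrc\,\omega^F$, show this form is closed by Cartan's formula together with the invariance of $\omega^F$ under the left translations generated by $\bar\xi$, and conclude exactness from simple connectedness of $N$.
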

\begin{proof} Since $\bar\xi$ is Killing, it holds $d^s \bar\xi=0$. By Proposition \ref{prop1} applied for $l=1$, we need to show that there exists $f$ such that $df=F(\bar\xi)^\flat$. 
	
	Note that $d(F(\bar\xi)^\flat)=d(\bar\xi \lrc \omega^F)=\mathcal{L}_{\bar\xi}(\omega^F) - \bar\xi \lrc d \omega^F=0$.
	Indeed, the last term vanishes since $\omega^F$ is closed, and the first term vanishes since $\omega^F$ is left-invariant, and the flow of $\bar\xi$ consists of left translations. Therefore, since $N$ is simply connected and every closed form is exact, there exists a function  $f$ such that $df=F(\bar\xi)^\flat$.
\end{proof}
Later in Proposition \ref{prop4} we shall give the explicit expression of the function $f$ on 2-step-nilpotent Lie groups.

\smallskip

A {\em magnetic Killing vector field} as in the corollary above will be denoted by $\xi^F=\bar\xi+f$. It is uniquely determined by $\xi$ up to a constant.

\begin{lem}\label{lemA} Let $\bar\xi$ denote a right-invariant (Killing) vector field on the manifold $N$ induced by $\xi\in\nn$, with flow denoted by $\varphi_t$ (recall that $\varphi_t(p)=\exp(t\xi)p$ for every $p\in N$). Then the flow of the Hamiltonian vector field $X_{\phi_{\bar\xi}}$ on $\Ta M$ is $(\varphi_t)_*$.
\end{lem}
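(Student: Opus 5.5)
The approach is to identify the infinitesimal generator of the one-parameter group $(\varphi_t)_*$ on $\Ta N$ and show it satisfies the defining equation of the Hamiltonian vector field of $\phi_{\bar\xi}$. I read $X_{\phi_{\bar\xi}}$ with respect to $\Omega=-d\eta$, i.e.\ the case $F=0$. Here $\phi_{\bar\xi}(v)=g(\bar\xi,v)$ is the linear function attached to $\bar\xi$ by \eqref{morphism1}. Let $Y$ be the vector field on $\Ta N$ with $Y_v=\frac{d}{dt}\big|_{t=0}(\varphi_t)_*v$. Since $(\varphi_t)_*$ is a one-parameter group of diffeomorphisms of $\Ta N$, it suffices to prove $Y\lrc\Omega=d\phi_{\bar\xi}$. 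The claim then follows by uniqueness of integral curves.

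\textbf{Step 1.} The tautological form $\eta$ of \eqref{eta} has the intrinsic description $\eta_v(U)=g(v,\pi_*U)$ for $U\in\Ta_v(\Ta N)$. On left-invariant data this reduces to $\eta_{pV}(\widetilde X)=\la X,V\ra$ and $\eta_{pV}(X^*)=0$.

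\textbf{Step 2.} The maps $\varphi_t$ are left translations by $\exp(t\xi)$, hence isometries. Using $\pi\circ(\varphi_t)_*=\varphi_t\circ\pi$, we get
$$((\varphi_t)_*^*\eta)_v(U)=g\big((\varphi_t)_*v,(\varphi_t)_*\pi_*U\big)=g(v,\pi_*U)=\eta_v(U).$$
So $(\varphi_t)_*$ preserves $\eta$, and differentiating in $t$ gives $\mathcal L_Y\eta=0$.

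\textbf{Step 3.} Cartan's formula gives $0=\mathcal L_Y\eta=Y\lrc d\eta+d(\eta(Y))$, so $Y\lrc\Omega=d(\eta(Y))$. Since $\pi_*Y_v=\bar\xi_{\pi(v)}$, we have $\eta(Y)(v)=g(v,\bar\xi)=\phi_{\bar\xi}(v)$. Hence $Y\lrc\Omega=d\phi_{\bar\xi}$, that is, $Y=X_{\phi_{\bar\xi}}$.

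The main obstacle is not the computation but making sure the intrinsic form of $\eta$ agrees with the left-invariant formulas \eqref{eta}. This amounts to checking that $\pi_*\widetilde X_{pV}=pX$ and $\pi_*X^*=0$, which is \eqref{Verthor}.

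If instead the magnetic form $\Omega^F$ is intended, I would add the term $Y\lrc\pi^*\omega^F=\pi^*(\bar\xi\lrc\omega^F)$. By the definition of $\omega^F$, this equals $\pi^*\big((F\bar\xi)^\flat\big)=\pi^*df$ for the function $f$ of Corollary \ref{coriv}. Then $Y\lrc\Omega^F=d(\phi_{\bar\xi}+f\circ\pi)$, so $(\varphi_t)_*$ is the $\Omega^F$-Hamiltonian flow of the magnetic Killing vector field $\xi^F$. I would state the lemma in whichever normalization the paper uses.
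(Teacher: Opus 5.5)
Your proof is correct and has the same structure as the paper's: let $Y$ be the generator of $(\varphi_t)_*$, show $\mathcal L_Y\eta=0$, note that $\eta(Y)=\phi_{\bar\xi}$ because $\pi_*Y=\bar\xi$, and conclude $Y\lrc\Omega=d\phi_{\bar\xi}$ from Cartan's formula (the paper writes this as $-d\eta(Y,U)=-Y(\eta(U))+U(\eta(Y))+\eta([Y,U])$). The one difference is how $\mathcal L_Y\eta=0$ is obtained. You use the intrinsic description $\eta_v(U)=g(v,\pi_*U)$ and the fact that the $\varphi_t$ are isometries, which is cleaner and works for any Killing field. The paper instead checks $(\mathcal L_Y\eta)(U)=0$ on the frame $\widetilde X, X^*$, using that $\bar\xi$ commutes with left-invariant fields, and then appeals to tensoriality. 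Your reading of $X_{\phi_{\bar\xi}}$ with respect to $\Omega$ matches the paper, and your $\Omega^F$ remark is exactly Lemma \ref{lemB}.
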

\begin{proof} Let $Y$ denote the vector field on $\Ta N$ whose flow is given by $\Phi_t:=(\varphi_t)_*$. Since $\pi\circ \Phi_t=\varphi_t \circ \pi_*$, we have $\pi_*Y=\bar \xi$. In particular, this implies that
\begin{equation}\label{etaY}\eta(Y)=\phi_{\bar\xi}.
\end{equation}
	
	For every $X\in \nn$, consider the vector fields  $X^*$ and $\widetilde{X}$ on $\Ta N$ defined  in Equation \eqref{Verthor}.  Let $\eta$ the 1-form given in Equation \eqref{eta}. We claim that: 
	\begin{equation}\label{assert1}\eta([Y,U])=Y(\eta(U))=0 \quad \mbox{ for $U= X^*$ or }\widetilde{X}.
		\end{equation}
	Indeed, 
	\begin{itemize}
		\item if $U=X^*$, then $\pi_*(U)=0$ and $\pi_*(Y)=\bar\xi$, which implies $\pi_*[Y,U]=0$. Therefore $\eta([Y,U])=0$ and on the other hand, since  $\eta(U)=0$, we get $Y(\eta(U))=0$.
		\item If $U=\widetilde{X}$, one has $\pi_*(U)=X$ (or more precisely the left-invariant vector field defined by $X$) and so, $\eta_{pV}(U)=\la X,V\ra$ for every $p\in N$ and $V\in \nn$. For every $t\in \RR$ we can write $\Phi_t(pV)=(\varphi_t)_*(pV)=\varphi_t(p)V$. Thus 
		$$Y(\eta(U))(pV)	 =  \left. \frac{d}{dt}\right|_{t=0}\eta_{\Phi_t(pV)}(U)= \left. \frac{d}{dt}\right|_{t=0}\la X,V\ra=0,$$
	and $\eta_{pV}([Y,U])=\la V, \pi_*[Y,U]\ra =\la V, [\bar\xi,X]
	\ra=0$, since the right-invariant vector field $\bar\xi$ commutes with the left-invariant vector field $X$.
		\end{itemize}
	
	 Then, using the fact that the expression $\eta([Y,U])-Y(\eta(U))=-(\mathcal{L}_Y\eta)(U)$ is tensorial in $U$, we get
	\begin{equation}\label{assert2}\eta([Y,U])=Y(\eta(U)) \qquad \mbox{ for every }U\in\Ta(\Ta N).\end{equation}
	
	From this, together with \eqref{etaY} and \eqref{assert2}, we obtain for every vector field $U\in \chi(\Ta N)$:
\begin{eqnarray*}
		\Omega(Y,U) & = & -d\eta_V(Y,U) = -Y(\eta(U)) + U(\eta(Y)) + \eta([Y,U])\\
		& = & U(\eta(Y))=U(\phi_{\bar\xi})\\
		& = & \Omega(X_{\phi_{\bar\xi}}, U).
	\end{eqnarray*}
		Since $\Omega$ is non-degenerate, this proves that  $Y=X_{\phi_{\bar\xi}}$. 
		\end{proof}
		As an immediate consequence, we remark that the Hamiltonian vector field $X_{\phi_{\bar\xi}}$  satisfies $\pi_*(X_{\phi_{\bar\xi}})=\bar\xi$.
	
%

In the following lemma we establish the relation of $X_{\phi_{\bar\xi}}$ with the Hamiltonian vector field associated with the Lorentz force $F$. 

\begin{lem}\label{lemB}	Let $F$ denote a Lorentz force on $N$. Let $X^F_H$ and $X_H$ denote the Hamiltonian vector fields of $H:\Ta N\to \RR$, defined with respect to the symplectic forms $\Omega^F$ and $\Omega$ on $\Ta N$. Then 
	$$X^F_{\phi_{\xi^{F}}}=X_{\phi_{\bar\xi}},$$
for every right-invariant vector field $\bar\xi$,	where $\xi^F=\bar\xi+f$ was defined in \eqref{magneticKilling}.
	\end{lem}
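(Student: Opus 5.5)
Since $\Omega^F$ is non-degenerate, it suffices to check that $Y:=X_{\phi_{\bar\xi}}$ satisfies the defining identity of $X^F_{\phi_{\xi^F}}$, namely
$$\Omega^F(Y,U)=U(\phi_{\xi^F})\qquad\text{for every } U\in\chi(\Ta N).$$
Here $\phi_{\xi^F}=\phi_{\bar\xi}+f\circ\pi$, using the linear identification of Lemma \ref{lem1} degree by degree.

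We split $\Omega^F=\Omega+\pi^*\omega^F$. For the first term, $\Omega(Y,U)=U(\phi_{\bar\xi})$ by the definition of $Y$ as the $\Omega$-Hamiltonian vector field of $\phi_{\bar\xi}$.

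For the second term, we use the remark after Lemma \ref{lemA}: since the flow of $Y$ is $(\varphi_t)_*$, we have $\pi_*Y=\bar\xi$. Hence
$$(\pi^*\omega^F)(Y,U)=\omega^F(\bar\xi,\pi_*U)=g(F\bar\xi,\pi_*U)=F(\bar\xi)^\flat(\pi_*U).$$
By Corollary \ref{coriv} and Definition \ref{magneticKilling}, $F(\bar\xi)^\flat=df$. Therefore this term equals $df(\pi_*U)=U(f\circ\pi)$.

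Adding the two contributions gives
$$\Omega^F(Y,U)=U(\phi_{\bar\xi})+U(f\circ\pi)=U(\phi_{\xi^F}),$$
which is the required identity, so non-degeneracy gives $Y=X^F_{\phi_{\xi^F}}$.

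The only delicate point is the sign and ordering convention for $\omega^F(X,Y)=g(FX,Y)$ against $df=F(\xi)^\flat$. With $\bar\xi$ in the first slot, $\omega^F(\bar\xi,\cdot)=g(F\bar\xi,\cdot)$, which matches $F(\bar\xi)^\flat$ exactly. So the conventions fixed in the paper are consistent, provided $\pi^*\omega^F$ enters $\Omega^F$ with a plus sign, as it does.
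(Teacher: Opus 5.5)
Your proposal is correct and follows essentially the same argument as the paper. Both split $\Omega^F=\Omega+\pi^*\omega^F$, use $\pi_*X_{\phi_{\bar\xi}}=\bar\xi$ (the remark after Lemma \ref{lemA}) together with $df=F(\bar\xi)^\flat$ to identify $(\pi^*\omega^F)(X_{\phi_{\bar\xi}},U)$ with $U(f\circ\pi)$, and conclude by non-degeneracy of $\Omega^F$.
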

\begin{proof} Let $\Omega^F$ denote the twisted symplectic form on $\Ta N$. Then, using $df=(F\bar\xi)^\flat$ we compute for every $U\in\Ta(\Ta N)$:
	\begin{eqnarray*}
		\Omega^F(X^F_{\phi_{\xi^F}},U)& = & U(\phi_{\xi^F})=U(\phi_{\bar\xi}+\phi_f)\\
	&= & \Omega(X_{\phi_{\bar\xi}}, U)+ U(\pi^*f)=\Omega (X_{\phi_{\bar\xi}}, U)+ \la F(\bar\xi), \pi_*U\ra\\
	& = & \Omega (X_{\phi_{\bar\xi}}, U) + \omega^F(\pi_* X_{\phi_{\bar\xi}}, \pi_*U)\\
		& = & \Omega (X_{\phi_{\bar\xi}}, U)  + \pi^* \omega^F(X_{\phi_{\bar\xi}}, U) = \Omega^F(X_{\phi_{\bar\xi}}, U).
	\end{eqnarray*}
Since $\Omega^F$ is non-degenerate, the conclusion follows.
\end{proof}

\begin{prop}\label{prop2} Let $\xi^F$ denote a  magnetic Killing vector field (Definition \ref{magneticKilling}) and let $K\in \Gamma(\Sym^k \Ta N)$ be a symmetric tensor. Then 
	$$\{\phi_{\xi^F}, \phi_K\} ^F=-\phi_{\mathcal L_{\bar\xi}K}.$$
\end{prop}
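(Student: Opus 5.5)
The plan is to reduce the Poisson bracket to a derivative of $\phi_K$ along the flow of the Hamiltonian vector field, and then to identify that flow explicitly using Lemmas \ref{lemA} and \ref{lemB}.

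First, by definition of the Hamiltonian vector field, $\{\phi_{\xi^F},\phi_K\}^F=\Omega^F(X^F_{\phi_{\xi^F}},X^F_{\phi_K})=-X^F_{\phi_{\xi^F}}(\phi_K)$, up to the sign convention; I will track the sign carefully against the definition $\Omega^F(X_\phi,U)=U(\phi)$. Lemma \ref{lemB} gives $X^F_{\phi_{\xi^F}}=X_{\phi_{\bar\xi}}$. Lemma \ref{lemA} gives that the flow of this vector field is $\Phi_t=(\varphi_t)_*$, where $\varphi_t$ is left translation by $\exp(t\xi)$, i.e. the flow of $\bar\xi$. So the bracket equals, up to sign,
\[
\left.\frac{d}{dt}\right|_{t=0}\phi_K\bigl((\varphi_t)_*v\bigr)\quad\text{at } v\in \Ta_pN.
\]

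Second, I compute this derivative. By definition,
\[
\phi_K\bigl((\varphi_t)_*v\bigr)=\frac1{k!}K_{\varphi_t(p)}\bigl((\varphi_t)_*v,\dots,(\varphi_t)_*v\bigr)=\frac1{k!}(\varphi_t^*K)_p(v,\dots,v).
\]
Here $K$ is viewed as a covariant symmetric tensor via the metric; this is legitimate since $\varphi_t$ is an isometry, so pullback commutes with the musical isomorphism. Differentiating at $t=0$ gives $\frac1{k!}(\mathcal L_{\bar\xi}K)(v,\dots,v)=\phi_{\mathcal L_{\bar\xi}K}(v)$. Note that $K$ need not be left-invariant, and this argument does not require it.

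The main obstacle is the sign bookkeeping. With $\{\phi_1,\phi_2\}^F=\Omega^F(X_{\phi_1},X_{\phi_2})$ and $\Omega^F(X_\phi,U)=U(\phi)$, we get
\[
\{\phi_{\xi^F},\phi_K\}^F=X_{\phi_K}(\phi_{\xi^F})=-X_{\phi_{\xi^F}}(\phi_K),
\]
by antisymmetry of $\Omega^F$. Combining this with the second step yields $-\phi_{\mathcal L_{\bar\xi}K}$, as claimed. I would double-check this against the earlier example $\{\phi_K,\phi_f\}^F=X_{\phi_f}(\phi_K)$ to make sure the conventions are consistent. A second point to verify is the identification of $\frac{d}{dt}\varphi_t^*K$ with $\mathcal L_{\bar\xi}K$, which is the standard definition of the Lie derivative.
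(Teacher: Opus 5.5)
Your proposal is correct and follows essentially the same route as the paper. You use antisymmetry to write the bracket as $-X^F_{\phi_{\xi^F}}(\phi_K)$, Lemma \ref{lemB} to replace this by $X_{\phi_{\bar\xi}}$, Lemma \ref{lemA} to identify its flow as $(\varphi_t)_*$, and then differentiate $\frac{1}{k!}(\varphi_t^*K)(V,\dots,V)$ at $t=0$ to obtain $\phi_{\mathcal L_{\bar\xi}K}$. Your remark that the metric identification commutes with pullback because the $\varphi_t$ are isometries makes explicit a point the paper uses silently.
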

\begin{proof} For every $V\in\Ta(\Ta N)$ we compute:
\begin{eqnarray*}
	\{\phi_{\xi^F}, \phi_K\}^F(V) & = & \Omega_V(X^F_{\phi_{\xi^F}}, X^F_{\phi_K})=-X^F_{\phi_{\xi^F}}(\phi_K)(V)\\
	& = & -X_{\phi_{\bar\xi}}(\phi_K)(V)=-\left. \frac{d}{dt}\right|_{t=0}\phi_K((\varphi_t)_*V)\\
	& = & -\frac{1}{k!}\left. \frac{d}{dt}\right|_{t=0}(\varphi^*_tK)(V, \hdots, V) = -\frac{1}{k!} (\mathcal L_{\bar\xi}K)(V,\hdots, V)\\
	& = & -\phi_{\mathcal L_{\bar\xi}K}(V).
		\end{eqnarray*}
\end{proof}

We end up this section with the following useful criterion:

\begin{thm}\label{prop3} Let $\xi^F_1, \xi_2^F$ denote magnetic Killing vector fields on $N$, written by Definition \ref{magneticKilling} as $\xi_i^F=\bar\xi_i+f_i$, with $\bar\xi_i$ right-invariant determined by $\xi_i\in\nn$, and $df_i=(F\bar\xi_i)^\flat=\bar\xi_i\lrc\,\omega^F$ for $ i=1,2$. Then 
	$$\{\phi_{\xi_1^F}, \phi_{\xi_2^F}\}^F=0 \quad \iff  \quad [\xi_1,\xi_2]=0 \mbox{ and } \omega^F(\xi_1,\xi_2)=0.$$
		\end{thm}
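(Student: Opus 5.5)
We will use Proposition \ref{prop2} with $K=\xi_2^F=\bar\xi_2+f_2$, which is a sum of a degree-one and a degree-zero tensor. By bilinearity of the Poisson bracket and $\phi_{\xi_2^F}=\phi_{\bar\xi_2}+\phi_{f_2}$, we get
$$\{\phi_{\xi_1^F},\phi_{\xi_2^F}\}^F=-\phi_{\mathcal L_{\bar\xi_1}\bar\xi_2}-\phi_{\mathcal L_{\bar\xi_1}f_2}=-\phi_{[\bar\xi_1,\bar\xi_2]}-\bar\xi_1(f_2)\circ\pi .$$
This function on $\Ta N$ has a part that is linear in the fibre and a part that is constant on fibres. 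It therefore vanishes identically if and only if both parts vanish separately, by Lemma \ref{lem1} (evaluate at $pV$ and $p(-V)$, or at $V=0$).

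For the linear part, we recall that the bracket of right-invariant vector fields satisfies $[\bar\xi_1,\bar\xi_2]=-\overline{[\xi_1,\xi_2]}$, since the right-invariant fields form an anti-isomorphic copy of $\nn$. A right-invariant vector field vanishes identically if and only if its value at $e$ vanishes. Hence $\phi_{[\bar\xi_1,\bar\xi_2]}\equiv0$ iff $[\xi_1,\xi_2]=0$.

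For the degree-zero part, we compute $\bar\xi_1(f_2)=df_2(\bar\xi_1)=\omega^F(\bar\xi_2,\bar\xi_1)$. At $e$ this equals $-\omega^F(\xi_1,\xi_2)$. So vanishing of the whole bracket implies both $[\xi_1,\xi_2]=0$ and $\omega^F(\xi_1,\xi_2)=0$. For the converse, we must show that the function $h:=\omega^F(\bar\xi_1,\bar\xi_2)$ vanishes on all of $N$ once it vanishes at $e$ and $[\xi_1,\xi_2]=0$.

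This converse is the main obstacle, and I would handle it by showing that $h$ is constant. Using $d\omega^F=0$ and $\mathcal L_{\bar\xi_i}\omega^F=0$, which holds because the flows are left translations and $\omega^F$ is left-invariant, Cartan's formula gives
$$dh=d(\bar\xi_2\lrc\bar\xi_1\lrc\omega^F)=\mathcal L_{\bar\xi_2}(\bar\xi_1\lrc\omega^F)-\bar\xi_2\lrc d(\bar\xi_1\lrc\omega^F)=[\bar\xi_2,\bar\xi_1]\lrc\omega^F ,$$
since $d(\bar\xi_1\lrc\omega^F)=0$ as in Corollary \ref{coriv}. When $[\xi_1,\xi_2]=0$ we have $[\bar\xi_1,\bar\xi_2]=0$, so $dh=0$. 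As $N$ is connected, $h$ is constant, equal to $h(e)=\omega^F(\xi_1,\xi_2)=0$. The bracket therefore vanishes, which gives the equivalence. The step needing care is the sign and identification conventions relating $\omega^F(\bar\xi_1,\bar\xi_2)$ at $e$ to $\omega^F(\xi_1,\xi_2)$. These are harmless, since only vanishing matters.
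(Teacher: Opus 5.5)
Your proposal is correct and follows essentially the paper's argument. Both apply Proposition \ref{prop2} to write the bracket as $\pm\phi_{\overline{[\xi_1,\xi_2]}}$ plus the function $\pm\,\omega^F(\bar\xi_1,\bar\xi_2)$, and both use Cartan's formula to show that this function is constant once $[\xi_1,\xi_2]=0$. You also make explicit two steps the paper leaves implicit: the separation into fibre-linear and fibre-constant parts, and the evaluation at $e$ needed for the forward direction.
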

	\begin{proof} Using the fact that $d(\bar\xi_i\lrc\,\omega^F)=\mathcal{L}_{\bar\xi_i}\omega^F=0$, we get:
%
	\begin{eqnarray*}
		d(\omega^F(\bar\xi_1,\bar\xi_2)) & = & d(\bar\xi_2\lrc\,(\bar\xi_1\lrc\,\omega_F)) = \mathcal L_{\bar\xi_2}(\bar\xi_1\lrc\,\omega_F)- \bar\xi_2 \lrc\, d(\bar\xi_1\lrc \omega^F)\\
	& = & ({\mathcal L}_{\bar\xi_2}\bar\xi_1)\lrc\,\omega^F=\overline{[\xi_1,\xi_2]}\lrc\,\omega_F
\end{eqnarray*}
(note that $[\bar\xi_2,\bar\xi_1]=\overline{[\xi_1,\xi_2]}$). Therefore, if $[\xi_1,\xi_2]=0$, then $\omega^F(\bar\xi_1,\bar\xi_2)$ is constant. On the other hand, by Proposition \ref{prop2} we have:
$$
	\{\phi_{\xi_1^F}, \phi_{\xi_2^F}\}^F =  \phi_{\mathcal L_{\bar\xi_2}\xi_1^F}=\phi_{[\bar\xi_2,\bar\xi_1]}(V)+\phi_{\bar\xi_2(f_1)} = \phi_{\overline{[\xi_1,\xi_2]}} + \phi_{\omega^F(\bar\xi_1,\bar\xi_2)},
$$
which proves the assertion.
	\end{proof}

\section{Examples of Liouville Integrable systems on 2-step nilmanifolds}

The goal of this section is to show examples of Liouville integrable magnetic flows on compact manifolds of the form $\Lambda \backslash N$, where $N$ is a 2-step nilpotent Lie group and $\Lambda$ denotes a cocompact lattice in $N$. In this framework, any left-invariant metric on $N$ descends to a Riemannian metric on $\Lambda \backslash N$ since $\Lambda$ acts by isometries. Examples will be the Kodaira-Thurston manifold in dimension 4 and Heisenberg nilmanifolds of any dimension.

We start by studying the integrability of the magnetic flow at the Lie group level.
In order to construct first integrals on the Lie group we make use of the functions associated to symmetric tensors studied in the previous sections, which later will be adapted for the quotient spaces. 

\smallskip

Recall that a Lie group is 2-step nilpotent if its Lie algebra is 2-step nilpotent, that is, $[U,[V,W]]=0$ for all $U,V,W\in \nn$. 

If $N$ is simply connected and 2-step nilpotent, the exponential map is a diffeomorphism $\exp:\nn \to N$, and the Baker–Campbell–Hausdorff formula reads
\begin{equation}\label{bch}\exp(X)\exp(Y)=\exp(X+Y+\frac12 [X,Y]), \quad \forall X,Y\in \nn.
\end{equation}

Let $\la\,,\,\ra$ be a left-invariant metric on $N$, determined at the Lie algebra level by a scalar product on $\nn$. We denote by $\zz$ the center of $\nn$ and consider the orthogonal decomposition:
\begin{equation}\label{decompn}
\nn =\vv \oplus \zz, \quad\mbox{ where } \vv=\zz^{\perp}.
\end{equation}
Furthermore, we consider the skew-symmetric maps $j(Z):\vv \to \vv$ given by 
$$\la j(Z) U,W\ra: =\la Z, [U,W]\ra \qquad \forall U,W\in \vv.$$
Since $\nn$ is 2-step nilpotent, the maps $\{j(Z)\}_{Z\in \zz}$, encode the Lie algebra structure of $\nn$ and the geometrical information of $(N,\la\,,\,\ra)$ see \cite{Eb}.

    Let $F$ be a left-invariant Lorentz force on the 2-step nilpotent Lie group $(N, \la\,,\,\ra)$ and let $\bar\xi$ denote a right-invariant vector field on $N$ (hence Killing) induced by some vector $\xi\in\nn$. In the next paragraphs we shall compute the function $f:N \to \RR$ such that $df=(F\bar\xi)^\flat$. 

For any $X\in\nn$, let $f_X: N\to \RR$ denote the function given by 
\begin{equation}\label{fx}f_X(\exp(W))=\la X, W\ra,\qquad \forall  W\in \nn.
\end{equation}
 We compute below the differential of $f_X$ at $p=\exp(W)\in N$. 
 For every $Y\in\nn$ we have by \eqref{bch}:
\begin{eqnarray*}
df_X(pY) & = & \left.\frac{d}{dt}\right|_{t=0}f_X(\exp(W)\exp(tY))=\left.\frac{d}{dt}\right|_{t=0}\la X,W+tY + \frac12 t[W,Y]\ra \\
& = & \la X,Y \ra  + \frac12 \la X, [W,Y]\ra\\
& = & \la p X, p Y\ra - \frac12 \la X,z\ra \la j(z)Y,W \ra,
\end{eqnarray*}
which implies that $p^{-1}(df_X)_p=X - \frac12 \la X,z\ra \sum f_{j(z)v_i} v_i,$ where  $v_i$ is any orthonormal basis of $\vv$. 

The following proposition is related to Corollary \ref{coriv} for the 2-step nilpotent situation. 

\begin{prop}\label{prop4} Let $(N, \la\,,\,\ra)$ denote a $2$-step nilpotent Lie group equipped with a left-invariant metric and let $F$ denote a left-invariant Lorentz force. The following relation holds: 
	$$d\left(f_{F(\xi)} - \frac12 \sum f_{v_i}f_{v_j} \la F[v_i, \xi], v_j\ra\right) = F(\bar\xi_p)^\flat.$$
	\end{prop}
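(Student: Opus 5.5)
The plan is to verify the identity pointwise. Fix $p=\exp(W)\in N$ with $W\in\nn$, and evaluate both sides on an arbitrary tangent vector $pY$ with $Y\in\nn$. Throughout, write $W=W_\vv+W_\zz$ and $Y=Y_\vv+Y_\zz$ according to \eqref{decompn}.

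\emph{Right-hand side.} By \eqref{barv}, $\bar\xi_p=p(\Ad(p^{-1})\xi)$. Since $\nn$ is 2-step nilpotent, $\Ad(\exp(-W))=e^{-\ad W}=1-\ad W$. Because $F$ and the metric are left-invariant, this gives
$$F(\bar\xi_p)^\flat(pY)=\la F\xi,Y\ra-\la F[W,\xi],Y\ra .$$

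\emph{Left-hand side.} From the computation preceding the statement,
$$df_X(pY)=\la X,Y\ra+\tfrac12\la X,[W,Y]\ra .$$
When $X=v_i\in\vv$, the second term vanishes because $[W,Y]\in\zz\perp\vv$. Moreover $f_{v_i}(p)=\la v_i,W\ra$. By the Leibniz rule, the differential of $\tfrac12\sum f_{v_i}f_{v_j}\la F[v_i,\xi],v_j\ra$ at $pY$ is
$$\tfrac12\bigl(\la F[Y_\vv,\xi],W_\vv\ra+\la F[W_\vv,\xi],Y_\vv\ra\bigr).$$
Since brackets only see $\vv$-components, $[Y_\vv,\xi]=[Y,\xi]$ and $[W_\vv,\xi]=[W,\xi]$. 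Hence the left-hand side equals
$$\la F\xi,Y\ra+\tfrac12\la F\xi,[W,Y]\ra-\tfrac12\la F[Y,\xi],W_\vv\ra-\tfrac12\la F[W,\xi],Y_\vv\ra .$$

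\emph{Comparison, which is the main step.} Everything then reduces to algebraic identities coming from closedness of the left-invariant form $\omega^F$. For left-invariant forms, $d\omega^F=0$ reads
$$\la F[X_1,X_2],X_3\ra+\la F[X_2,X_3],X_1\ra+\la F[X_3,X_1],X_2\ra=0\qquad\text{for all }X_1,X_2,X_3\in\nn.$$
I would use two consequences of this.

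First, take $X_3=Z\in\zz$. The two terms containing brackets with $Z$ vanish, so $\la F[X_1,X_2],Z\ra=0$, that is, $F([\nn,\nn])\perp\zz$. This lets me replace $W_\vv$ by $W$ and $Y_\vv$ by $Y$ in the two quadratic terms.

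Second, apply the cyclic identity to the triple $(W,Y,\xi)$. Using skew-symmetry of $F$, $\la F\xi,[W,Y]\ra=-\la F[W,Y],\xi\ra$, and the cyclic identity rewrites this as $\la F[Y,\xi],W\ra+\la F[\xi,W],Y\ra$.

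Substituting both facts, the $\la F[Y,\xi],W\ra$ terms cancel. The remaining terms combine to $\la F\xi,Y\ra-\la F[W,\xi],Y\ra$, which matches the right-hand side.

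The only delicate point is bookkeeping of signs and of the $\vv$/$\zz$ components. The observation $F([\nn,\nn])\perp\zz$, which is forced by closedness, is exactly what makes the projections onto $\vv$ in the quadratic correction term harmless.
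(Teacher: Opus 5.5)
Your proof is correct and follows the paper's route: evaluate at $p=\exp(W)$, use $\Ad(p^{-1})\xi=\xi-[W,\xi]$ together with the BCH formula for $df_X$, and finish with the cyclic identity expressing $d\omega^F=0$. The differences are presentational. You pair against a test vector $pY$ rather than expanding $p^{-1}\sigma_p$ in the basis $\{v_i\}$ with the maps $j(z)$. You also state and prove explicitly that $F([\nn,\nn])\perp\zz$, which the paper uses tacitly when it writes $F[v_i,\xi]$ in place of its $\vv$-component.
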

\begin{proof}
For $p=\exp(W)\in N$ we have $\bar\xi_p=p \Ad(p^{-1})(\xi)=p(\xi-[W,\xi])$ and so, $F(\bar\xi_p)=p \left(F(\xi)-F[W,\xi]\right)$. 
	
Consider the 1-form $\sigma:=d\left( f_{F( \xi)} - \frac12 \sum_i f_{ v_i}f_{ v_j} \la F[ v_i,  \xi],  v_j\ra\right)$. 	From the computations above and using the fact that $\omega^F$ is closed, we obtain:
\begin{eqnarray*}
	p^{-1}\sigma_p & = & F( \xi)-\frac12 \la F( \xi),z\ra \sum_i f_{j(z) v_i} v_i - \frac12 \sum_{i,j}  v_i f_{ v_j}\la F[ v_i, \xi],  v_j\ra - \frac12 \sum_i f_{ v_i} F[ v_i, \xi]\\
	& = & F(\xi) -\frac12 \la F(\xi),z\ra \sum_i f_{j(z) v_i} v_i -\frac12 \sum_{i,j}  v_i f_{ v_j}(\la F[ v_j, \xi],  v_i\ra + \la F[ v_i, v_j],  \xi \ra) \\
	&&-\frac12\sum_i f_{ v_i} F[ v_i, \xi]\\
& = & F( \xi)-\frac12 \la F(\xi),z\ra \sum_i f_{j(z) v_i} v_i - \sum_i f_{ v_i} F[ v_i, \xi] -\frac12 \sum_{i,j}  v_i f_{ v_j} \la F(z), \xi\ra \la j(z) v_i, v_j\ra\\
& = & F( \xi) - \sum_i f_{ v_i} F[ v_i, \xi].
	\end{eqnarray*}
\end{proof}

\begin{example}\label{h3} The Heisenberg Lie group of dimension $3$ can be identified with $\RR^3$ together with the product given by
	$$(x_1,x_2,x_3)(y_1,y_2,y_3) = (x_1+y_1,x_2+y_2,x_3+y_3+\frac12 (x_1y_2-x_2y_1)).$$
	Usual computations show that a basis of left-invariant vector fields at $p=(x,y,z)$ is given by
	$$e_1(p)=\partial_x -\frac12y \partial_z, \qquad e_2(p)=\partial_y + \frac12 x \partial_z, \qquad e_3(p)=\partial_z,$$
	where we denote $\partial_u=\frac{\partial}{\partial u}$. These vector fields obey the only non-trivial Lie bracket relation
	$[e_1,e_2]=e_3$.
	This Lie algebra is usually denoted by $\hh_3$. 
	Take the metric that makes of $\{e_1, e_2, e_3\}$ an orthonormal basis. Then $\zz=span\{e_3\}$, while $\vv=\zz^{\perp}=span\{e_1,e_2\}$. The set $\{j(Z)\}_{Z\in \zz}$ as above is generated by $J:j(e_3)$ whose matrix in the basis $\{e_1,e_2\}$ is
	$$\left( \begin{matrix}
		0 & -1\\ 1 & 0
	\end{matrix} \right).
	$$

	Let $\nn:=\hh_3\oplus\mathbb{R}e_4$ denote the trivial extension of  $\hh_3$. This is also a 2-step nilpotent Lie algebra with Lie group underlying $\RR^4$ and multiplication given by
	$$(x_1,x_2,x_3,x_4)(y_1,y_2,y_3,y_4) = (x_1+y_1,x_2+y_2,x_3+y_3+\frac12 (x_1y_2-x_2y_1),x_4+y_4).$$
	
We extend the metric of $\hh_3$ to $\nn$ in such way that $e_4\perp \hh_3$ and $\la e_4,e_4\ra=1$. 	In this case the Lie algebra decomposes as
 orthogonal direct sum	
 $$
 \nn=\vv\oplus C(\nn) \oplus \aa,
 $$
	where $\vv$ as above,  $C(\nn)=span\{e_3\}$ is the commutator of the Lie algebra and $\aa=span\{e_4\}$. In this case  $j(e_3)=J$ and $j(e_4)=0$. 
	\end{example}

We shall now study the Liouville integrability of the magnetic flow. In general one searches for analytical first integrals but this could be not possible (see for instance \cite{Ta} for geodesic fields).


\begin{defn}\label{completeintegrability} The magnetic flow  on a Riemannian manifold $(M, \la\,,\,\ra)$ of dimension $n$ for the Lorentz force $F$, is {\em completely integrable} or {\em Liouville integrable} 
if there exists a family of $n-1$ first integrals $\Psi_j:\Ta M \to \RR$, $j=1, \hdots, n-1$
such that
	\begin{itemize}
		\item $\{\Psi_i,\Psi_j \}=0$ for all $i,j$ and 
		\item  The gradients of  $	\{\En, \Psi_1, \hdots, \Psi_{n-1} \}$ are linearly independent on a dense subset  of $\Ta M$.
	\end{itemize}
\end{defn}


\smallskip

As an application of the framework developed above, we will now prove complete integrability of the magnetic flow on some 2-step nilpotent Lie groups equipped with a left-invariant metric and a left-invariant Lorentz force.

\begin{prop}\label{dim4} Let $\nn$ denote a $2$-step nilpotent Lie algebra of dimension $4$ equipped with a scalar product. If the Lorentz force $F$ is left-invariant and has rank $2$, then the corresponding magnetic flow on the simply connected Lie group $N$ is completely integrable. 
\end{prop}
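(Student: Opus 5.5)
The plan is to build three pairwise Poisson-commuting first integrals out of magnetic Killing vector fields $\xi^F=\bar\xi+f$ (Corollary \ref{coriv}, Proposition \ref{prop4}), using Theorem \ref{prop3} to test involutivity. The energy $\En$ automatically commutes with every first integral, since being a first integral means Poisson-commuting with $\En$. A $4$-dimensional $2$-step nilpotent Lie algebra is either abelian or isomorphic to $\hh_3\oplus\RR$ as in Example \ref{h3}, so I split into these two cases.

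\emph{Abelian case.} Write $\omega^F=\alpha\wedge\beta$, which is possible since $F$ has rank $2$. Let $W$ be any $3$-dimensional subspace containing $\ker F$, for instance $W=\ker F\oplus\RR\xi_3$. Then $\omega^F$ vanishes on $W$, because $\omega^F(\xi,\cdot)=0$ for $\xi\in\ker F$. Choose a basis $\xi_1,\xi_2,\xi_3$ of $W$. All brackets vanish, so Theorem \ref{prop3} shows that $\phi_{\xi_1^F},\phi_{\xi_2^F},\phi_{\xi_3^F}$ pairwise commute. For independence, note that at $pV$ the vertical parts of the differentials of $\phi_{\xi_i^F}$ and of $\En$ are, by Lemma \ref{l33}, $\Ad(p^{-1})\xi_i$ and $V$. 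These are linearly independent whenever $V\notin \Ad(p^{-1})W$, which holds on an open dense set.

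\emph{Case $\nn=\hh_3\oplus\RR e_4$.} First, closedness of $\omega^F$ gives $0=d\omega^F(e_1,e_2,e_4)=-\omega^F(e_3,e_4)$, so the center $\zz=\mathrm{span}\{e_3,e_4\}$ is $\omega^F$-isotropic. Every $3$-dimensional abelian subalgebra has the form $\mathrm{span}\{u,e_3,e_4\}$ with $u\in\vv$. Such a subalgebra is isotropic iff $u$ lies in the kernel of the linear map $\vv\to\zz^*$, $u\mapsto\omega^F(u,\cdot)|_{\zz}$. If this map has nontrivial kernel, I take $\xi_1=u$, $\xi_2=e_3$, $\xi_3=e_4$ and conclude exactly as in the abelian case. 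The independence argument is the same, with $W=\mathrm{span}\{u,e_3,e_4\}$; note that $\Ad(p^{-1})$ fixes $\zz$ and moves $u$ only by a central term.

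\emph{Remaining subcase (the main obstacle).} Suppose $u\mapsto\omega^F(u,\cdot)|_\zz$ is an isomorphism. Then $\ker F$ meets $\zz$ in a line, and the rank-$2$ condition together with closedness should pin $F$ down fairly rigidly. I would normalize by an automorphism of $\hh_3\oplus\RR$, possibly changing the metric, so that $\omega^F$ is $e^1\wedge e^4$ or $e^2\wedge e^4$ up to terms in $e^3$. In this subcase I still keep $\xi_2=e_3$ and $\xi_3=e_4$: they commute and are $\omega^F$-orthogonal, so $\phi_{e_3^F}$ and $\phi_{e_4^F}$ are in involution. For the third integral I would look for a quadratic magnetic Killing tensor. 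The natural candidate is a Casimir-type combination $\Psi=\phi_{\xi_1^F}^2+\phi_{\xi_2^{\prime F}}^2+c\,\phi_{e_3^F}+\dots$ with $\xi_1,\xi_2'\in\vv$. I would choose it using the bracket formula of Theorem \ref{prop3}, which shows that $\{\phi_{\xi^F},\phi_{\eta^F}\}^F=\phi_{\overline{[\xi,\eta]}}+\omega^F(\bar\xi,\bar\eta)$ is itself of the form (magnetic Killing field $+$ constant). With that, the Leibniz rule lets one cancel the brackets with $\phi_{e_3^F}$ and $\phi_{e_4^F}$. Concretely, I expect the linear functions $\phi_{e_3^F}$, $\phi_{e_4^F}$ and $\phi_{\xi^F}$ ($\xi\in\vv$) to span a finite-dimensional Lie algebra under $\{\,,\}^F$, a central extension of $\nn$ by $\omega^F$. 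The third integral should then be a quadratic Casimir of the subalgebra commuting with $e_3,e_4$, possibly corrected by a multiple of $\En$.

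Verifying that such a quadratic element exists, commutes with both linear integrals, and is functionally independent of $\En,\phi_{e_3^F},\phi_{e_4^F}$ on a dense set is where I expect the real work. If the Casimir approach fails, I would fall back on solving the system \eqref{system} of Proposition \ref{prop1} directly for $K=K_2+K_1+K_0$. I would take $K_2$ left-invariant, since then $d^sK_2$ is algebraic, and solve for $K_1$ and $K_0$ using Proposition \ref{prop4}.
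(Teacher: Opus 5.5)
Your construction in the case where $u\mapsto\omega^F(u,\cdot)|_{\zz}$ has nonzero kernel is the same as the paper's. The paper also takes the isotropic abelian subalgebra $\mathrm{span}\{u,e_3,e_4\}$ and applies Theorem~\ref{prop3}. In its normal form $\omega^F=ae^{13}+be^{23}+ce^{14}+\rho e^{12}$, it takes $u=e_2$ when $b=0$ and $u=be_1-ae_2$ when $c=0\neq b$.

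\textbf{The gap.} Your ``remaining subcase'' is left open. You only sketch a quadratic Casimir that is never constructed or checked, so as written the proof is incomplete. The missing observation is that this subcase cannot occur, and excluding it is exactly where the rank-$2$ hypothesis is needed. You never actually use that hypothesis in the Heisenberg case.

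\textbf{The one-line fix.} Since $\zz$ is $\omega^F$-isotropic, write $\omega^F=\rho\,e^{12}+\sum_{i\in\{1,2\},\,j\in\{3,4\}}M_{ij}\,e^i\wedge e^j$. Here $\det M$ is the determinant of your map $\vv\to\zz^*$. A direct computation gives $\omega^F\wedge\omega^F=-2\det(M)\,e^{1234}$. Hence rank $2$, i.e.\ $\omega^F\wedge\omega^F=0$, forces $\det M=0$. So there is always a nonzero $u\in\vv$ with $\omega^F(u,\zz)=0$. In the paper's normal form $\det M=-bc$, which is why its two cases are exhaustive.

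\textbf{A false claim.} Your statement that in this subcase ``$\ker F$ meets $\zz$ in a line'' is wrong. Suppose the map is an isomorphism and $u+z\in\ker F$. Pairing with $\zz$ gives $u=0$. Then pairing with $\vv$ gives $z=0$. So $F$ is invertible, i.e.\ has rank $4$, which is precisely the symplectic case the proposition excludes. The Casimir construction is therefore unnecessary here, and as sketched it would not be a proof even where it applied.

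With this line added, your argument is complete and slightly more thorough than the paper's. You also treat the abelian case, which the paper's dimension count implicitly excludes by passing straight to $\dim C(\nn)=1$. And you give an explicit proof of functional independence on an open dense set, using the vertical gradients $\Ad(p^{-1})\xi_i$ and $V$; the paper leaves this as ``not hard to check''.
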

\begin{proof}
	By Theorem \ref{prop3} it suffices to find three right-invariant vector fields $\bar\xi_1, \bar\xi_2, \bar\xi_3$ on $N$, corresponding to linearly independent vectors $\xi_1, \xi_2, \xi_3\in \nn$, such that 
	\begin{equation}\label{eij}[\xi_i,\xi_j]=0,\quad\mbox{ and }\quad\omega^F(\xi_i,\xi_j)=0,\qquad \forall i,j.
	\end{equation}

In view of the decomposition \eqref{decompn} for the Lie algebra $\nn$, we have $\zz=C(\nn) \oplus \aa$, with  $j:C(\nn)\to \vv$ injective   $\Rightarrow\, \dim \vv\geq 2$. If $\dim C(\nn)=2$, then $\dim(v)\leq 2$ and $j$ is not injective. Thus, $\dim C(\nn)=1$, $\dim\vv=2$ and $\dim\aa=1$.  

There is a orthonormal basis, $\{e_1, e_2, e_3, e_4\}$ of $\nn$ such that $\vv=span\{e_1, e_2\}$, $C(\nn)=span\{e_3\}$, and $\aa=span\{e_4\}$. Thus, 
we have $[e_1,e_2]=\lambda e_3$ for some $\lambda\neq 0$. 

Since the magnetic field is closed, then $\omega^F(e_3,e_4)=0$. By using a rotation in $\vv$, (that is, by application of an isometry, see \cite{OS3}), we may assume that $\omega^F(e_2,e_4)=0$. In this case, $\omega^F$ has the form
\begin{equation}\label{of}\omega^F=a e^{13}+ b e^{23}+ c e^{14}+ \rho e^{12}.\end{equation}
Here we denote by $e^{rs}$ the 2-form $e^r\wedge e^s$, where $\{e^i\}$ is the dual basis of $\{e_i\}$ on $\nn$. 

The assumption that $F$ has rank 2 is equivalent to $\omega^F\wedge\omega^F=0$, that is $bc=0$. We distinguish the following two cases:

\begin{itemize}
\item If $c=0$ and $b\ne 0$, we consider the vectors $\xi_1:=be_1-ae_2,\ \xi_2:=e_3,\ \xi_3:=e_4$.

\item  If $b=0$, we consider the vectors $\xi_1:=e_2,\ \xi_2:=e_3,\ \xi_3:=e_4$. 

\end{itemize}	
	It is not hard to check that they satisfy the required conditions in Equation \eqref{eij} in each case. 
\end{proof}

We shall now prove the complete integrability of the magnetic flow for a family of Lorentz forces on some extensions to higher dimensions of the Heisenberg Lie group of Example \ref{h3}.

\begin{example} The Heisenberg Lie group of dimension $2n+1$ has the underlying differentiable structure of $\RR^{2n} \times \RR$ together with the product given by
		$$(v_1,z_1)(v_2,z_2)=(v_1+v_2, z_1+z_2-\frac12 v_1^t J v_2),\qquad \forall v_1,v_2\in \RR^{2n}, \ \forall z_1,z_2\in \RR,$$
		where $J$ is the standard complex structure in $\RR^2n$ defined by $J(e_{2i-1})=e_{2i}$ and $J(e_{2i})=-e_{2i-1}$.
		For $v=(x_1, \hdots, x_{2n})$, a basis of left-invariant vector fields is given by
		$$e_{2i-1}(p)=\partial_{2i-1} - \frac12 x_{2i}\partial_z\quad e_{2i}(p)=\partial_{2i} + \frac12 x_{2i-1}\partial_z \quad e_{2n+1}=\partial_z, \quad \mbox{ for }i=1, \hdots, n. $$
		They satisfy the non-trivial Lie bracket relations
		$$[e_{2i-1}, e_{2i}]=e_{2n+1},  \qquad \mbox{ for }i=1, \hdots, n. $$
We denote the corresponding Lie algebra by $\hh_{2n+1}$. Choose the metric that makes the basis above an orthonormal basis. Note that we have $C(\hh_{2n+1})=\RR e_{2n+1}$, and the maps $j(Z)$ are spanned by $j(e_{2n+1}):=J:\vv \to \vv$ for $\vv=C(\hh_{2n+1})^{\perp}=\RR^{2n}$.

In particular a skew-symmetric  derivation $D$ on $\hh_{2n+1}$ may satisfy  $JD=DJ$. In fact, since $D[U,V]= [DU,V]+ [U,DV]$ we derive that for any  $Z\in \zz$ it holds $DZ=0$ and this implies that $D\vv\subseteq \vv$ and $[DU,V]+[U,DV]=0$ for all $U,V\in \vv$. In particular one has $\la DU, [V,W]\ra =0$ for all $U,V,W\in \vv$.
	\end{example}

\begin{rem} Skew-symmetric  derivations of $\hh_{2n+1}$ give rise to Lorentz forces of type I, according to the terminology of \cite{OS}, while a magnetic field on a 2-step nilpotent Lie algebra of dimension 4 is sum of a magnetic field of type I and type II. Magnetic fields of type II do not exist in Heisenberg Lie algebras $\hh_{2n+1}$ with $n\leq 2$, see \cite{OS}.
\end{rem}

\begin{prop}\label{integhn}
	Let $\nn=\hh_{2n+1}\oplus \RR^k$ denote a trivial extension of the Heisenberg Lie algebra of dimension $2n+1$ equipped with a product metric such that $\hh_{2n+1}\perp\RR^k$. Let $F$ be a skew-symmetric derivation of $\nn$ such that $\left.F\right|_{\RR^k}\equiv 0$. Then the corresponding magnetic flow is completely integrable on  the associated simply connected Lie group.
\end{prop}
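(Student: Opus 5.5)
The plan is to use Theorem~\ref{prop3} and Proposition~\ref{prop2} to produce, besides the energy $\En$, exactly $2n+k=\dim N-1$ Poisson-commuting first integrals. Some will be built from right-invariant magnetic Killing vector fields and the rest from left-invariant quadratic tensors.

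\textbf{Step 1: normal form for $F$.} As observed in the Heisenberg example, $F$ kills $\zz$, preserves $\vv$, and commutes with $J=j(e_{2n+1})$ on $\vv$, so $F|_\vv\in\mathfrak u(n)$. A unitary change of basis of $\vv=\RR^{2n}$ is simultaneously an isometry and a Lie algebra automorphism of $\hh_{2n+1}$. Using it, I diagonalize $F|_\vv$ and assume $Fe_{2i-1}=\lambda_ie_{2i}$ and $Fe_{2i}=-\lambda_ie_{2i-1}$ on the $J$-invariant planes $P_i=\mathrm{span}\{e_{2i-1},e_{2i}\}$. Then $\omega^F=\sum_i\lambda_i e^{2i-1,2i}$, and $F$ vanishes on $\zz=\RR e_{2n+1}\oplus\RR^k$.

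\textbf{Step 2: the integrals.} I would take the following three families.
\begin{enumerate}[(a)]
\item The $k+1$ linear integrals $\phi_{\zeta^F}$ for $\zeta$ in an orthonormal basis of $\zz$. These vectors are central and $\omega^F(\zeta,\cdot)=0$, so by Theorem~\ref{prop3} they commute with every $\phi_{\xi^F}$ and with each other.
\item The $n$ quadratic integrals $\Psi_i:=\phi_{\xi_{2i-1}^F}^2+\phi_{\xi_{2i}^F}^2$, where $\xi_a=e_a$. These are first integrals because they are polynomials in first integrals. By Theorem~\ref{prop3} (its proof gives the bracket explicitly), for $i\neq j$ every factor of $\Psi_i$ commutes with every factor of $\Psi_j$, since different planes have zero bracket and zero $\omega^F$. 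Within a plane one has $\{\phi_{\xi^F_{2i-1}},\phi_{\xi^F_{2i}}\}^F=\pm c_i$, with $c_i$ equal to $\phi_{e_{2n+1}^F}$ plus a constant. The Leibniz rule then gives $\{\Psi_i,\Psi_i\}=0$ trivially, and each $\Psi_i$ commutes with (a) because $c_i$ commutes with everything.
\item The left-invariant quadratics $Q_i:=\phi_{K_i}$ with $K_i=\tfrac12(e_{2i-1}^2+e_{2i}^2)$, i.e.\ $Q_i(pV)=|V_{P_i}|^2$, for $i=1,\dots,n-1$. Using formula \eqref{poissonKL} with $L=g$, one gets $\{\En,Q_i\}^F=\la V,[V,V_{P_i}]\ra+\omega^F(V,V_{P_i})$. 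This vanishes because $J$ and $F$ preserve $P_i$ and are skew, so both terms reduce to $\la AV_{P_i},V_{P_i}\ra=0$. Hence each $Q_i$ is a first integral. The same computation gives $\{Q_i,Q_j\}^F=0$. Finally, by Proposition~\ref{prop2} every $\phi_{\xi^F}$ commutes with every left-invariant $\phi_K$, since $\mathcal L_{\bar\xi}K=0$: the flow of $\bar\xi$ consists of left translations. Therefore the $Q_i$ commute with (a) and (b).
\end{enumerate}
Only $n-1$ of the $Q_i$ are used, because $\sum_iQ_i$ is determined by $\En$ and the central momenta. This gives $(k+1)+n+(n-1)=2n+k$ integrals, as required.

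\textbf{Step 3: independence, the main obstacle.} It remains to show that the gradients of $\En$ and the $2n+k$ functions above are linearly independent on a dense set. All these functions are real-analytic on $\Ta N$, indeed polynomial in global coordinates. So it suffices to exhibit one point where the differentials are independent; the set where they are dependent is then a proper analytic subset, and its complement is open and dense.

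I would choose the point $p=e$ and a generic $V$, and write the functions in the coordinates $(p,V)$. The left-invariant ones $\En$, $Q_i$ and $\la V,\zeta\ra$ depend only on $V$. At $p=e$, the right-invariant linear pieces $\la V,\Ad(p^{-1})\xi\ra$ have nontrivial $p$-derivatives, while the $f$-parts have differentials $F(\bar\xi)^\flat$. I would then check the rank of the Jacobian block by block, $\vv$-plane by plane. The $Q_i$ and $\En$ control the radial $V$-directions in the planes $P_i$ and in $\zz$, and $\Psi_i$ contributes an independent angular direction in $P_i$ through its $p$-derivative. This bookkeeping is where I expect the real work to lie, in particular handling planes with $\lambda_i=0$ and making sure that $\Psi_i$ is not functionally dependent on $Q_i$ and $c_i$.
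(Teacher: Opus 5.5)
Your argument is correct, and it shares the paper's skeleton. Both use the same unitary normal form for $F$, linear integrals from right-invariant fields via Corollary~\ref{coriv} and Theorem~\ref{prop3}, and the left-invariant quadratics $S_i$ (your $Q_i$), which commute with every $\phi_{\xi^F}$ by Proposition~\ref{prop2}.

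The genuine difference is your family (b). In each plane $P_i$ the paper uses only the single linear integral $\phi_{v_{2i-1}^F}$. These already commute pairwise by Theorem~\ref{prop3}, since $[v_{2i-1},v_{2j-1}]=0$ and $\omega^F(v_{2i-1},v_{2j-1})=0$. So there is no need to square and add the non-commuting partner $\phi_{v_{2i}^F}$. Your $\Psi_i$ is valid, but the linear choice is simpler, and it is what the section on compact quotients exploits. By \eqref{lambdamagnetic}, lattice elements act on $\phi_{v_{2i-1}^F}$ by translations, which can be periodized. By contrast, $\Psi_i$ pulls back to a sum of squares of translated functions.

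On the other hand, your version makes explicit two things the paper's short proof leaves implicit. First, Theorem~\ref{prop3} and Proposition~\ref{prop2} alone do not show that $\phi_{S_i}$ commutes with $\En$ or with $\phi_{S_j}$; your computation via \eqref{poissonKL} does. Second, the paper never addresses independence. In fact $\En$ is already a function of its $2n+k+1$ listed integrals, so one $S_i$ must be discarded, exactly as you do.

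Your Step 3 closes quickly. Write $p=\exp(W)$ and $V_a=\la V,e_a\ra$. The quadratic term in \eqref{phixi} vanishes because $F$ kills $[\nn,\nn]$, so
\[
\phi_{e_{2i-1}^F}=V_{2i-1}+c_iw_{2i},\qquad \phi_{e_{2i}^F}=V_{2i}-c_iw_{2i-1},\qquad c_i=V_{2n+1}+\lambda_i .
\]
Hence at $W=0$ the $dW$-part of $d\Psi_i$ is $2c_i(V_{2i-1}\,dw_{2i}-V_{2i}\,dw_{2i-1})$, and these $n$ parts have disjoint supports.

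The remaining $n+k+1$ functions depend only on $V$. Their $V$-gradients span $\mathrm{span}\{V_{P_1},\dots,V_{P_n},e_{2n+1},Z_1,\dots,Z_k\}$, where $Z_1,\dots,Z_k$ is an orthonormal basis of $\RR^k$. Thus the Jacobian is block-triangular of full rank $2n+k+1$ whenever all $V_{P_i}\neq0$ and all $c_i\neq0$. A plane with $\lambda_i=0$ causes no trouble, since $c_i$ still contains $V_{2n+1}$. Your analyticity argument then gives density.
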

\begin{proof}
	Since $F$ is a skew-symmetric derivation on $\hh_{2n+1}$, it vanishes on the center $\RR e_{2n+1}$ and it satisfies $JF=FJ$ on the subspace $\vv\subset \hh_{2n+1}$. Consequently, there exists an orthonormal basis $v_1, \hdots, v_{2n}$ of $\vv$ such that
	\begin{equation}\label{JF}
	    J = \sum_{i=1}^n  v_{2i-1}\wedge v_{2i}, \quad F = \sum_{i=1}^n \eta_i v_{2i-1}\wedge v_{2i}, \quad \mbox{ for } i=1, \hdots, n.\end{equation}
	Note that $\eta_i\in \RR$ are not all zero.  
	Let $Z_1, \hdots, Z_k$ denote a orthonormal basis of left- (and right-)invariant vector fields on $\RR^k$. 
	
	Let $\bar v_j$ denote as before the right-invariant vector field defined by $v_j$. We denote $v_{2n+1}:=e_{2n+1}$. By Corollary \ref{coriv} and Theorem \ref{prop3},  $\{\phi_{v_1^F}, \phi_{v_3^F}, \hdots, \phi_{v_{2n+1}^F}\}$ is  a family of $n+1$ commuting first integrals of the magnetic flow. 
	
	Let $S_i$ denote the left-invariant symmetric tensor on $N$ whose value at the identity is $S_i=v_{2i-1}^2+v_{2i}^2$ for $i=1, \hdots, n$. Denote by $\phi_{S_i}$ the corresponding function on $\Ta N$. 
	
	{\it Assertion.} The set of functions on $\Ta N$, $\{\phi_{v_1^F}, \phi_{v_3^F}, \hdots, \phi_{v_{2n+1}^F}, \phi_{Z_1}, \hdots, \phi_{Z_k}, \phi_{S_1}, \hdots,\phi_{S_n}\}$ gives a family of first integrals in involution for the Poisson bracket associated to the Lorentz force $F$. 
	
	This follows from Theorem \ref{prop3} and Proposition \ref{prop2}.
\end{proof}

\section{Complete integrability on compact spaces}

Let $\Lambda$ denote a cocompact lattice on a 2-step nilpotent Lie group $N$, that is, $\Lambda$ is a discrete subgroup of $N$ such that $M:=\Gamma\backslash N$ is a compact space. 

Elements of $\Lambda \backslash N$ are denoted by $\bar{p}$ and represent the class $\{\gamma p\ ,\ \gamma\in \Gamma\}$.

Since the metric on $N$ is invariant by translations on the left by elements of $N$, it descends to $\Gamma \backslash N$ in such a way that $\pi:N \to \Gamma \backslash N$ is a local isometry. 

In this way we also define the corresponding Lorentz force $F$ on the quotient space $M$ and consider the corresponding magnetic flow on $M$. The aim now is to construct first integrals for this magnetic flow on $\Gamma \backslash N$. 

Let $\gamma\in \Gamma$, that we write as $\gamma=\exp(\lambda)$. We compute $\gamma^*(f_X)$ for $X\in \nn$, where $f_X$ was defined in \eqref{fx}:
\begin{eqnarray*}
	\gamma^*(f_X)(\exp(W))& = & f_X(\exp(\lambda)\exp(W))=\la X, \lambda + W + \frac12 [\lambda,W]\ra\\
	& = & \la X, \lambda\ra + f_X(\exp(W))+\frac12 \sum_k\la X,z_k\ra f_{j(z_k)\lambda}(\exp(W)),
\end{eqnarray*}
where $\{z_k\}$ is an orthonormal basis of the center $\zz$ of $\nn$. 
Consequently, we get
\begin{equation}\label{gfx}\gamma^* f_X=f_X+\la X, \lambda \ra +\frac 12 \sum_i \la X,z_i\ra f_{j(z_i)\lambda}.
\end{equation}

Consider now some $\xi\in\nn$ and the magnetic Killing vector field $\xi^F=\bar\xi + f$ given by Corollary \ref{coriv}. By Proposition \ref{prop4}, the associated first integral is given by
\begin{equation}\label{phixi}
\phi_{\xi^F}(pV)=\la pV,\xi p \ra + f_{F(\xi)}(p) - \frac12 \sum f_{ v_i}(p) f_{ v_j}(p)\la F[ v_i,\xi], v_j\ra,
\end{equation}
where $\{v_i\}$ is an orthonormal basis of $\vv$. 
To compute $\gamma^* \phi_{\xi}$, set $\gamma=\exp(\lambda)$ as above. By using \eqref{gfx} we obtain:
\begin{eqnarray*}
\phi_{\xi^F}(\gamma pV) & = & \la \gamma p V, \xi \gamma p\ra + f_{F(\xi)}(\gamma p)-\frac12 \sum_{i,j} f_{ v_i}(\gamma p)f_{ v_j}(\gamma p)\la F[ v_i,\xi], v_j\ra\\
& = & \la pV, \gamma^{-1}\xi\gamma p\ra + (f_{F(\xi)}(p)+\la F(\xi), \lambda\ra + \frac12\sum_k  \la F(\xi), z_k\ra f_{j(z_k)\lambda}(p))\\
& & - \frac12 \sum_{i,j} (f_{ v_i}(p)+\la  v_i,\lambda\ra)(f_{ v_j}(p)+\la  v_j,\lambda\ra)\la F[ v_i,\xi], v_j\ra.	
\end{eqnarray*}

Recall that we have $Ad(\gamma^{-1})\xi = \xi-[\lambda, \xi]$. Thus, using the fact that $\omega^F$ is closed, together with the computation above gives:
\begin{eqnarray*}
	\phi_{\xi^F}(\gamma pV) & = & I_{\xi^F}(pV) - \la pV, [\lambda,\xi]p\ra +\la F(\xi),\lambda \ra + \frac12\sum_k  \la F(\xi),z_k\ra f_{j(z_k)\lambda}(p) \\
& &  - \frac12\left( \sum_i f_{ v_i}(p)\la F[ v_i,\xi],\lambda\ra  +\sum_j f_{ v_j}(p)
\la F[\lambda,\xi], v_j\ra +\la F[\lambda,\xi],\lambda\ra\right)\\
& = & I_{\xi^F}(pV)- \la pV,[\lambda,\xi]p\ra + \la F(\xi),\lambda \ra  +\frac12\sum_{i,k}  \la F(\xi),z_k\ra f_{v_i}(p)\la z_k,[\lambda,v_i]\ra\\
& &
-\frac12\left(2\sum_i f_{ v_i}(p) (\la F[\lambda,\xi], v_i\ra+ \la F[ \lambda,v_i],\xi\ra) + \la F[\lambda,\xi],\lambda\ra \right)\\
& = & \phi_{\xi^F}(pV) - I_{[\lambda,\xi]^F}(pV)+ \la F(\xi),\lambda\ra -\frac12 \la F[\lambda,\xi],\lambda\ra\\
& = & \phi_{(\xi-[\lambda,\xi])^F}(pV)+\la F(\xi),\lambda\ra -\frac12 \la F[\lambda,\xi],\lambda\ra.
\end{eqnarray*}
Notice that the expression $\la F(\xi),\lambda\ra -\frac12 \la F[\lambda,\xi],\lambda\ra$ is a constant function. 

Therefore for every $\xi\in \nn$ and $\gamma\in\Gamma$, the pull-back of the associated first integral of the magnetic flow by the left action of $\gamma$ on $\Ta N$, is given by
\begin{equation}\label{lambdamagnetic}
	\gamma^* \phi_{\xi^F}=\phi_{(\xi-[\lambda,\xi])^F}+\la F(\xi)-\frac12 F[\lambda,\xi],\lambda\ra.
\end{equation}

The aim now is to adapt the first integrals of Proposition \ref{dim4} to a compact quotient $\Gamma \backslash N$. 

\subsection{Magnetic flows on 4-dimensional nilmanifolds} Consider the group $N=H_3\times \RR$ endowed with a left-invariant metric $g$ and a left-invariant force $F$ of rank $2$ with corresponding closed 2-form $\omega^F$. For every co-compact lattice $\Gamma\subset N$, we consider the compact manifold $M_\Gamma:=\Gamma\backslash N$, with the induced Riemannian metric and closed 2-form, also denoted by $g$ and $\omega^F$. 

There exists an orthonormal basis $\{e_1,e_2,e_3,e_4\}$ of $\nn$ whose only non-trivial commutator relation is $[e_1,e_2]=e_3$.
The Lie algebra of $\nn$ can be identified with $\RR^4$ via this basis. For every non-zero real numbers $x_1,\ldots,x_4$ we will consider the lattice $\Lambda$ of $\nn$ spanned by $\{x_1e_1,\ldots,x_4e_4\}$. 
Let $\Gamma:=\exp(\Lambda)$ be the image by the exponential map of the lattice $\Lambda$. The set $\Gamma$ is clearly discrete, and by \eqref{bch}, it is a subgroup of $N$ if and only if 
\begin{equation}\label{lat}
    \frac{x_1x_2}{2x_3}\in\mathbb{Z}. 
\end{equation}
Thus $\Gamma$ a co-compact lattice of $N$ if and only if \eqref{lat} holds. Note that in this case, $M_\Gamma$ is diffeomorphic to the Kodaira-Thurston manifold $\mathrm{KT}^4$.

We will adapt the first integrals of the magnetic flow on $N$ given by Proposition \ref{dim4} (with $\omega^F$ given by \eqref{of}), in order to obtain first integrals of the corresponding magnetic flow on $\Gamma \backslash N$ with the induced force for some specific choices of $\Gamma$. For this, we need to construct first integrals on $N$ invariant by $\Gamma$. 

We introduce the simpler notation $I_j:=\phi_{{e_j}^F}$ for the first integrals \eqref{phixi} associated to the right-invariant vector fields determined by $e_j$ for $j=1,2,3,4$.

Let $\gamma=\exp(\lambda)$ be any element of $N$, acting on $N$ by left translation. Using \eqref{of} and \eqref{lambdamagnetic} one gets
\begin{equation}\label{gi}
\begin{array}{rcl}
	\gamma^*I_1 & = &\phi_{(e_1-[\lambda,e_1])^F}+\la F(e_1)-\frac12  F[\lambda,e_1], \lambda \ra\\
    \gamma^*I_2 & = &\phi_{(e_2-[\lambda,e_2])^F}+\la F(e_2)-\frac12  F[\lambda,e_2], \lambda \ra\\
	\gamma^* I_3 & = & \phi_{e_3^F}+\la F(e_3), \lambda\ra=I_3- \la ae_1+be_2, \lambda\ra\\
	\gamma^* I_4 & = & \phi_{e_4^F}+\la F(e_4), \lambda\ra=I_4- c\la e_1, \lambda\ra.	
\end{array}\end{equation}

We consider two cases, as in the proof of Proposition \ref{dim4}, according to whether $b=0$ or $c=0$ and $b\ne 0$.

$\bullet$ Case $b=0$.
In this case, the commuting first integrals of the magnetic flow on $N$ constructed in Proposition \ref{dim4} are $I_2,I_3,I_4$. 
The system \eqref{gi} implies that if $\lambda=\lambda_1e_1+\lambda_2e_2+\lambda_3e_3+\lambda_4e_4$, then
\begin{equation}\label{pb1}
\begin{split}
      \gamma^* I_2&= I_2-\lambda_1I_3+\la (-\rho) e_1 +\frac{a\lambda_1}2 e_1, \lambda\ra = I_2-\lambda_1 I_3-\rho \lambda_1 +\frac{\lambda_1^2}2 a\\
      \gamma^* I_3&= I_3 - a \lambda_1\\
     \gamma^* I_4&= I_4 - c \lambda_1
\end{split}
 \end{equation}
	
We define 
\begin{equation}\label{l1}
    \Lambda:=\mathrm{Span}_\ZZ(2e_1,e_2,e_3,e_4),
\end{equation} 
which satisfies \eqref{lat}, whence $\Gamma:=\exp(\Lambda)$ is a lattice of $N$. Using \eqref{pb1} it is then straightforward to check that the functions $f_i:\Ta M \to \RR$ defined by
\begin{itemize}
\item $f_2 := \left\{\begin{array}{ll}
	I_2-\frac1{2a}(I_3^2 + \rho)^2 & \mbox{ if } a\neq 0\\
	e^{-1/({I_3+\rho})^2}\sin\left(\frac{\pi I_2}{I_3+\rho}\right)  & \mbox{ if } a=0
\end{array} \right.$
	\item $f_3 := \left\{\begin{array}{ll}
		\sin( \frac{\pi}{a}I_3) & \mbox{ if } a\neq 0\\
		I_3 & \mbox{ if } a=0
		\end{array} \right.$
	\item $f_4 := \left\{\begin{array}{ll}
		\sin(\frac{\pi}{c}I_4 ) & \mbox{ if } c\neq 0\\
		I_4 & \mbox{ if } c=0
	\end{array} \right.$
\end{itemize}
are $\Gamma$-invariant, functionally independent and thus define first integrals in involution of the magnetic flow on $M_\Gamma$.

Note that $f_3$ and $f_4$ are always analytical, and $f_2$ is analytical whenever $a\neq 0$. If $a=0$, then $F(e_3)=0$, hence by \eqref{phixi}, $I_3(pV)=\langle pV,e_3p\rangle=\langle V,e_3\rangle$, for every $p\in N$ and $V\in \nn$, thus showing that $f_2$ is analytical at every energy level $\En<|\rho|$. At energy levels $\En\geq|\rho|$, $f_2$ is only differentiable.

$\bullet$ Case $c=0$, $b\ne 0$. In this case, the first integrals of the magnetic flow on $N$ constructed in Proposition \ref{dim4} are $bI_1-aI_2$, $I_3$ and $I_4$. 

For every $\lambda=\lambda_1e_1+\lambda_2e_2+\lambda_3e_3+\lambda_4e_4\in \nn$, we denote by $\gamma:=\exp(\lambda)$. Then \eqref{gi} shows that the pull-backs of the above first integrals of the magnetic flow on $N$ by the left action of $\gamma$ are given by
\begin{equation}\label{pb}
\begin{split}
      \gamma^* (bI_1-aI_2)&=bI_1-aI_2+(I_3+\rho)(a\lambda_1+b\lambda_2)-\frac12(a\lambda_1+b\lambda_2)^2\\
      \gamma^* I_3&= I_3 - (a \lambda_1+b\lambda_2)\\
      \gamma^* I_4&= I_4
\end{split}
 \end{equation}

We consider the following lattice $\Lambda$ of $\nn$:
\begin{equation}\label{l2}
   \Lambda=\begin{cases}\mathrm{Span}_\ZZ(2e_1,\frac{1}{b}e_2,\frac{1}{b}e_3,e_4)\quad \mbox{if} \ a=0\\
    \mathrm{Span}_\ZZ(\frac{2}{a}e_1,\frac{1}{b}e_2,\frac{1}{ab}e_3,e_4)\quad \mbox{if} \ a\neq0.
\end{cases} 
\end{equation}

By \eqref{lat}, $\Gamma:=\exp(\Lambda)$ is a lattice in $N$, and by \eqref{pb}, the functions $g_i:\Ta M \to \RR$ defined by
\begin{itemize}
\item $g_2 := 	bI_1-aI_2+\frac{1}{2}(I_3 + \rho)^2$
	\item $g_3 := 
		\sin( 2\pi I_3)$
	\item $g_4 := 
		I_4.$
\end{itemize}
are easily seen to be $\Gamma$-invariant, and thus define commuting analytical first integrals of the magnetic flow on $M_\Gamma$.
Note that in the case where $a=b=c=0$ the magnetic 2-form $\omega^F=\rho e^1 \wedge e^2$ is an exact 2-form on the quotient space. Summarizing the above discussion, we have proved the following result.

\begin{thm} Let $\omega^F=a e^{13}+ b e^{23}+ c e^{14}+ \rho e^{12}$ be any invariant closed $2$-form on $\nn:=\hh_3\oplus\RR$, and assume that $\omega^F$ has rank $2$ (so that $bc=0$). Consider the lattice $\Gamma=\exp(\Lambda)$ of $N:=H_3\times \RR$. Then consider the magnetic flow on $\Gamma\backslash N$

\begin{enumerate}
    \item For $\Lambda$  given by \eqref{l1} if $b=0$, the magnetic flow  is completely integrable by analytical first integrals $f_2,f_3,f_4$ either if $a\neq 0$ or if $a=0$ at any  energy level $\En<|\rho|$.
     If $a=0$, one has complete integrability by differentiable (but not all analytical) first integrals at energy levels $\En\geq |\rho|$.
    \item For $\Lambda$  given by \eqref{l2} if $c=0$ and $b\ne 0$, the magnetic flow is completely integrable by analytical first integrals $g_2, g_3,g_4$ at any energy level.
\end{enumerate}

	In particular, this is true for the magnetic field represented by the exact $2$-form $\rho\, d e^3$. 
	\end{thm}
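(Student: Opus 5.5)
The plan is to deduce the theorem from the group-level integrability of Proposition \ref{dim4} and then check $\Gamma$-invariance of suitable functions of those integrals. Since $\pi\colon N\to M_\Gamma$ is a local isometry intertwining the Lorentz forces, magnetic trajectories on $M_\Gamma$ lift to magnetic trajectories on $N$. Hence a $\Gamma$-invariant first integral on $\Ta N$ descends to a first integral on $\Ta M_\Gamma$. Poisson brackets are local, so pairwise involution descends as well, and so does independence of gradients on a dense set.

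\textbf{Step 1 (involution).} Start with the commuting integrals $I_2,I_3,I_4$ (case $b=0$) or $bI_1-aI_2,I_3,I_4$ (case $c=0$, $b\ne0$). These are in involution by Theorem \ref{prop3}, together with the relations \eqref{eij} checked in Proposition \ref{dim4}. Any functions of commuting integrals still commute, since $\{h(\phi),k(\psi)\}^F=h'k'\{\phi,\psi\}^F$. So the $f_i$ and the $g_i$ are pairwise in involution and are first integrals on $N$.

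\textbf{Step 2 ($\Gamma$-invariance).} Because pullback by $\gamma\mapsto\gamma^*$ is a group action, it suffices to check invariance on the generators $\gamma=\exp(\lambda)$ with $\lambda$ running over the $\ZZ$-basis of $\Lambda$. For each generator, substitute $\lambda$ into \eqref{pb1} or \eqref{pb}.

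In case $b=0$ with $\Lambda$ as in \eqref{l1}, the generators $e_2,e_3,e_4$ have $\lambda_1=0$, so all of $I_2,I_3,I_4$ are fixed. For $\lambda=2e_1$, one gets $I_3\mapsto I_3-2a$ and $I_4\mapsto I_4-2c$. Then $\sin(\pi I_3/a)$ and $\sin(\pi I_4/c)$ are invariant, since the shifts are by $2\pi$. Also $I_2\mapsto I_2-2I_3-2\rho+2a$. With $u=I_3+\rho$, the quadratic $I_2-\frac1{2a}u^2$ is invariant, because $\frac1{2a}\bigl((u-2a)^2-u^2\bigr)=-2u+2a$. (I read $f_2$ as $I_2-\frac1{2a}(I_3+\rho)^2$; the displayed formula has a typo.) When $a=0$, $u$ is invariant and $I_2\mapsto I_2-2u$, so $\sin(\pi I_2/u)$ is invariant. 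The factor $e^{-1/u^2}$ makes $f_2$ smooth across $u=0$.

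In case $c=0$, $b\ne0$ with $\Lambda$ as in \eqref{l2}, each generator shifts $s:=a\lambda_1+b\lambda_2$ by an integer: $2$ or $1$. Then $I_3\mapsto I_3-s$, so $\sin(2\pi I_3)$ is invariant. For $g_2$, one has $(I_3+\rho)s-\frac12s^2+\frac12\bigl((I_3+\rho-s)^2-(I_3+\rho)^2\bigr)=0$, so $g_2$ is invariant. Finally $I_4$ is untouched.

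\textbf{Step 3 (independence and regularity).} This is the main obstacle. Functional independence of $\En$ with the three integrals on a dense set should follow from independence of $\En,I_2,I_3,I_4$ (or $\En,bI_1-aI_2,I_3,I_4$). Their fiber differentials are, respectively, $V$ and the right-invariant fields $\Ad(p^{-1})\xi_j$. The latter are linearly independent, and generically independent of $V$.

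Composing with $\sin$ or with $e^{-1/u^2}\sin$ loses rank only on closed nowhere dense sets: the zeros of $\cos$, and $u=0$. Real-analyticity of these sets' complements gives density.

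For the analyticity claim when $a=0$, use $I_3(pV)=\langle V,e_3\rangle$. Then $|I_3|\le|V|$, and on levels $\En<|\rho|$ (reading $\En$ suitably as $|V|$) we have $u\neq0$, so $f_2$ is analytic there. For higher levels $u=0$ occurs and only smoothness holds. I would verify this energy-normalization point carefully, as it is the only delicate estimate.

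Finally, when $a=b=c=0$ we have $\omega^F=\rho e^{12}=-\rho\,de^3$, which is exact. This is the case covered by the last sentence of the theorem.
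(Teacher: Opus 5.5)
Your proposal is correct and follows essentially the paper's route: the commuting integrals of Proposition \ref{dim4}, the pull-back formulas \eqref{pb1} and \eqref{pb} to check $\Gamma$-invariance for the lattices \eqref{l1} and \eqref{l2}, and composition with $\sin$ or $e^{-1/u^2}\sin$. You rightly read $f_2$ as $I_2-\frac1{2a}(I_3+\rho)^2$, and you supply the involution, descent and independence arguments that the paper only asserts.

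Your hesitation about the energy threshold is justified. The bound $|I_3|=|\la V,e_3\ra|\le|V|$, which is also all the paper's own argument uses, keeps an energy level away from $\{u=I_3+\rho=0\}$ exactly when $|V|<|\rho|$ on it. That means $\En<\rho^2/2$ for $\En=\frac12|V|^2$, so the condition $\En<|\rho|$ in the statement must be read in terms of $|V|$. Taken literally it fails for small $|\rho|$: at $V=-\rho e_3$ one has $u=0$ while $\En=\rho^2/2<|\rho|$ whenever $|\rho|<2$.
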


\smallskip

\subsection{Magnetic flows on Heisenberg nilmanifolds of any dimension}
In this section we shall prove integrability of magnetic fields on compact nilmanifolds associated to the  simply connected 2-step nilpotent Lie group $N:=H_{2n+1}\times\RR^k$. 

With the notation from Proposition \ref{integhn}, the set $$\{\phi_{v_1^F}, \phi_{v_3^F}, \hdots, \phi_{v_{2n+1}^F}, \phi_{Z_1}, \hdots, \phi_{Z_k}, \phi_{S_1}, \hdots,\phi_{S_n}\}$$ is a family of first integrals in involution for the Poisson bracket associated to the Lorentz force $F$ given by \eqref{JF}. Since the tensors $Z_i$ and $S_j$ are left-invariant, the corresponding first integrals are automatically $\Gamma$-invariant. It remains to study $\phi_{v_{2j-1}^F}$ for $j=1,\ldots,n+1$. We introduce the notation $I_j:=\phi_{v_{2j-1}^F}$. By \eqref{lambdamagnetic}, we get for every $\gamma:=\exp(\sum_{i=1}^{2n+1}\lambda_iv_i)$:
\begin{eqnarray*}
    \gamma^*I_j&=&I_j+\lambda_{2j}(I_{n+1}+\eta_j),\qquad\mbox{for}\ j\le n,\\
    \gamma^*I_{n+1}&=&I_{n+1}.
\end{eqnarray*}
We define 
\begin{eqnarray*}
     f_j&:=& e^{-1/({I_{2n+1}+\eta_j})^2}\sin\left( \frac{\pi I_j}{I_{2n+1}+\eta_j}\right),\qquad\mbox{for}\ 1\leq j\leq n,\\
     f_{n+1}&:=&I_{n+1}.
\end{eqnarray*}
Since $F(e_{2n+1})=0$, \eqref{phixi} gives $I_{2n+1}(pV)=\langle V,e_{2n+1}\rangle$ for every $p\in N$ and $V\in\nn$. Therefore, $f_j$ is analytical at every energy level $\En<|\eta_j|$, and only differentiable at energy levels $\En\geq|\eta_j|$.

  By taking the lattice $\Lambda:=2\ZZ^{2n+1}\oplus\ZZ^k\subset \hh_{2n+1}\oplus\RR^k$, we obtain by \eqref{bch} that $\Gamma:=\exp(\Lambda)$ is a lattice in $N$ and $f_j$ are $\Gamma$-invariant for $1\leq j\leq n+1$.

The set of first integrals 
$\{f_1,\ldots,f_{n+1}, \phi_{Z_1}, \hdots, \phi_{Z_k}, \phi_{S_1}, \hdots,\phi_{S_n}\}$ 
projects to a family of first integrals in involution for the Poisson bracket associated to the Lorentz force $F$ given by \eqref{JF} on the compact manifold $M:=\Gamma\backslash N$.

\begin{thm}
The magnetic flow for the invariant Lorentz force $F$ on $\Ta M$ with $M=\Gamma \backslash N$, $N$ as above, is completely integrable with analytical first integrals at every energy level $\En<\min(|\eta_1|,\ldots,|\eta_n|)$ and 
  with differentiable (but not all analytical) first integrals at energy levels $\En\geq\min(|\eta_1|,\ldots,|\eta_n|)$. 
  \end{thm}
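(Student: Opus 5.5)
The plan is to assemble the final theorem from the pieces already established: the involutive family on $N$ from Proposition \ref{integhn}, the transformation rule \eqref{lambdamagnetic}, and the $\Gamma$-invariant modifications $f_j$. The argument has three steps: invariance, involutivity, and functional independence together with regularity.

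\textbf{Step 1: the family is well defined on $\Ta M$.} The tensors $Z_i$ and $S_j$ are left-invariant, so $\phi_{Z_i}$ and $\phi_{S_j}$ are invariant under every left translation, in particular under $\Gamma$. For $I_j=\phi_{v_{2j-1}^F}$ we use \eqref{lambdamagnetic} with $\xi=v_{2j-1}$. Since $[\lambda,v_{2j-1}]=-\lambda_{2j}e_{2n+1}$ and $F$ vanishes on the center, this gives
\[
\gamma^*I_j=I_j+\lambda_{2j}\,(I_{n+1}+\eta_j),
\]
where the constant term is computed from \eqref{JF}. On $\Lambda=2\ZZ^{2n+1}\oplus\ZZ^k$ the coefficient $\lambda_{2j}$ is an even integer. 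Hence the argument $\pi I_j/(I_{n+1}+\eta_j)$ of $f_j$ shifts by a multiple of $2\pi$, so $\sin$ is unchanged. Meanwhile $I_{n+1}$ is $\Gamma$-invariant, and so is the prefactor $e^{-1/(I_{n+1}+\eta_j)^2}$. Extending $f_j$ by $0$ on the hypersurface $I_{n+1}=-\eta_j$ gives a smooth function, because the exponential factor kills all derivatives of the bounded oscillating term. By \eqref{bch}, $\Gamma$ is a lattice, so all these functions descend to $\Ta M$.

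\textbf{Step 2: involutivity.} Each $f_j$ is a smooth function of $I_j$ and $I_{n+1}$. By the Leibniz and chain rules, $\{f_j,\cdot\}^F$ is a combination of $\{I_j,\cdot\}^F$ and $\{I_{n+1},\cdot\}^F$. The assertion in Proposition \ref{integhn}, which combines Theorem \ref{prop3} for the pairs of magnetic Killing vector fields with Proposition \ref{prop2} for the left-invariant tensors, gives vanishing brackets among the original family. Involutivity therefore persists. Since $\Gamma\backslash N$ is covered locally isometrically by $N$, the brackets on $\Ta M$ are computed upstairs.

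\textbf{Step 3: independence and analyticity; this is the main obstacle.} First I would check independence of the $2n+k+1$ original functions together with $\En$ on a dense open subset of $\Ta N$. Their vertical derivatives give the linear forms $v_{2j-1}$, $e_{2n+1}$, $Z_i$ together with the quadratic $S_j$. The differentials of the $S_j$ are $2\la V,v_{2j-1}\ra v_{2j-1}+2\la V,v_{2j}\ra v_{2j}$, which generically add the missing directions. The energy is $\En=\sum\phi_{S_j}+\tfrac12\la V,e_{2n+1}\ra^2+\sum\phi_{Z_i}^2$, a function of the others, so one must count carefully: $\dim M-1=2n+k$ additional integrals are needed. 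The combination $df_j=A\,dI_j+B\,dI_{n+1}$ has $A=\frac{\pi}{I_{n+1}+\eta_j}e^{-1/(I_{n+1}+\eta_j)^2}\cos(\cdot)$, which is nonzero off a nowhere dense set, so independence survives on a dense set.

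For regularity, $F(e_{2n+1})=0$ gives $I_{n+1}(pV)=\la V,e_{2n+1}\ra$, so $|I_{n+1}|\le|V|$. If $\En<|\eta_j|$ for all $j$, the denominator $I_{n+1}+\eta_j$ never vanishes on that energy level, and every $f_j$ is real-analytic there. At higher energies the level set meets $I_{n+1}=-\eta_j$, and $f_j$ is only $C^\infty$ there. I expect the delicate point to be the bookkeeping of this independence count together with the energy normalization (the paper writes $\En$ as $\Vert V\Vert^2$), rather than the invariance computation.
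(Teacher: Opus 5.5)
Your proposal is correct and follows the paper's argument. The shared steps are the rule $\gamma^*I_j=I_j+\lambda_{2j}(I_{n+1}+\eta_j)$ from \eqref{lambdamagnetic}, the $\Gamma$-invariant $f_j$ for $\Lambda=2\ZZ^{2n+1}\oplus\ZZ^k$, and analyticity read off from $I_{n+1}(pV)=\la V,e_{2n+1}\ra$. Your smooth extension across $I_{n+1}=-\eta_j$, the chain-rule involutivity and the independence count fill in details the paper leaves implicit. The count works once one $\phi_{S_a}$ is traded for $\En$, since the vertical differentials are independent wherever all $\la V,v_{2a}\ra\neq0$.

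As you suspected, the bound $|I_{n+1}|\le\Vert V\Vert$ controls $\Vert V\Vert<|\eta_j|$. So the energy threshold, in your write-up as in the paper, must be read with that normalization and not literally as $\Vert V\Vert^2<|\eta_j|$.
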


\noindent{\bf Conflict of interest.} No potential conflict of interest is reported by the authors.

\noindent{\bf Data availability statement.}
Data sharing is not applicable -- the paper describes entirely theoretical research.


\begin{thebibliography}{GGGG}

\bibitem{ABM} {\sc S.V. Agapov, M. Bialy, A.E. Mironov}, {\em Integrable Magnetic Geodesic Flows on 2-Torus: New Examples via Quasi-Linear System of PDEs}, Comm.  Math. Physics, {\bf 351} (3), 993--1007 (2017).

\bibitem{AV} {\sc S.V. Agapov, A. A. Valyuzhenic},  {\em  Polynomial integrals of magnetic geodesic
flows on the 2-torus on several energy levels},  Discrete Contin. Dyn. Syst.,
{\bf 39}(11), 6565--6583 (2019).

 \bibitem{BKM} {\sc A. Bolsinov, A. Konaev, V. Matveev} {\em Integrability of the magnetic geodesic flow on
the sphere with a constant 2-form}, arXiv:2506.23312 (2025).

 \bibitem{Bu} {\sc L. Bulter}, {\em Integrable geodesic flows with wild first integrals: the case of 2-step nilmanifolds}Ergodic Theory Dyn. Systems {\bf 23} (3), 771--797 (2003).

\bibitem{CLMS} {\sc M. Ceki\'c, T. Lefeuvre, A. Moroianu, U. Semmelmann}, {\em Correspondence between Pestov and Weitzenböck identities},
Math. Proc. Cambridge Phil. Soc. {\bf 178} (3), 443--463 (2025).

\bibitem{dBM20} {\sc V. del Barco, A. Moroianu}, {\em Symmetric Killing tensors on nilmanifolds}, Bull. Soc. Math. France {\bf 148} (3), 411--438 (2020).

\bibitem{BdBM25} {\sc R. Berto-Cuevas, V. del Barco, A. Moroianu}, {\em Symmetric Killing tensors on almost abelian Lie groups}, arXiv:2509.26195 (2025).

\bibitem{DGJ1} {\sc V. Dragovi\'c, B. Gaji\'c,  B. Jovanovi\'c}, {\em Integrability of homogeneous exact magnetic flows on spheres}, arXiv:2504.20515 (2025). 

\bibitem{DGJ2} {\sc V. Dragovi\'c, B. Gaji\'c,  B. Jovanovi\'c}, {\em A Lax representation and integrability of homogeneous exact magnetic flows on spheres in all dimensions}, arXiv:2506.23299 (2025).

\bibitem{Eb} {\sc P. Eberlein}, {\em Geometry of 2-step nilpotent Lie groups with a left-invariant metric}, Ann. Sci. ENS., 4 serie {\bf 27} (5),  611--660 (1994). 

\bibitem{HMS} {\sc K. Heil, A. Moroianu, U. Semmelmann}, {\em Killing and Conformal Killing tensors}, J. Geom. Phys. {\bf 106}, 383--400 (2016).

\bibitem{OS} {\sc G. Ovando, M. Subils},  {\em Magnetic fields on non-singular 2-step nilpotent Lie groups}, 
J. Pure Appl. Algebra {\bf 228}, No. 6, Article ID 107618, 24 p. (2024).

\bibitem{OS3} {\sc G. Ovando, M. Subils}, {\em Magnetic trajectories on Heisenberg nilmanifolds}, arXiv:2407.05515 (2024).

\bibitem{Ta} {\sc I. Taimanov}, {\em Topological obstructions to integrability of geodesic flows on non-simply-connected
manifolds}, Mathematics of the USSR-Izvestiya {\bf 30} (2), 403--409 (1988).


\end{thebibliography}
\end{document}